\numberwithin{equation}{section}
\newcommand{\rleft}{\mathopen{}\mathclose\bgroup\left}
\newcommand{\rright}{\aftergroup\egroup\right}
\newtheorem{theorem}{Theorem}[section]
\newtheorem{lemma}[theorem]{Lemma}
\newtheorem{prop}[theorem]{Proposition}
\newtheorem{conj}[theorem]{Conjecture}
\newtheorem{cor}[theorem]{Corollary}
\theoremstyle{definition}
\newtheorem{definition}[theorem]{Definition}
\newtheorem{remark}[theorem]{Remark}
\DeclareMathOperator{\cone}{cone}
\DeclareMathOperator{\lspan}{span}
\DeclareMathOperator{\Aut}{Aut}
\DeclareMathOperator{\Div}{div}
\DeclareMathOperator*{\Hom}{Hom}
\DeclareMathOperator*{\Supp}{supp}
\DeclareMathOperator*{\SC}{sc}
\DeclareMathOperator*{\relint}{relint}
\DeclareMathOperator*{\Spec}{Spec}
\DeclareMathOperator*{\Cl}{Cl}
\DeclareMathOperator{\rank}{rank}
\newcommand{\Cd}{\mathds{C}}
\newcommand{\Qd}{\mathds{Q}}
\newcommand{\Zd}{\mathds{Z}}
\newcommand{\Kd}{\mathds{K}}
\newcommand{\Td}{\mathds{T}}
\newcommand{\is}[1]{\wp(#1)}
\newcommand{\adiv}{\Delta}
\newcommand{\bdiv}{\Dm}
\newcommand{\srad}{\mathcal{S}}
\newcommand{\OXD}{\smash{\Gamma(X, \Om_X(D))}}
\newcommand{\OXs}{\smash{\Gamma(X, \Om^*_X)}}
\newcommand{\OXbars}{\smash{\Gamma(\overline{X}, \Om^*_{\overline{X}})}}
\newcommand{\OXis}{\smash{\Gamma(X_i, \Om^*_{X_i})}}
\newcommand{\Z}{\Zd}
\newcommand{\Q}{\Qd}
\newcommand{\C}{\Kd}
\newcommand{\Vm}{\mathcal{V}}
\newcommand{\Dm}{\mathcal{D}}
\newcommand{\Rm}{\mathcal{R}}
\newcommand{\Pm}{\mathcal{P}}
\newcommand{\Nm}{\mathcal{N}}
\newcommand{\Mm}{\mathcal{M}}
\newcommand{\Om}{\mathcal{O}}
\newcommand{\Tm}{\mathcal{T}}
\newcommand{\Qm}{\mathcal{Q}}
\newcommand{\Xf}{\mathfrak{X}}
\newcommand{\Rs}{\mathscr{R}}
\newcommand{\cfr}{\mathfrak{c}}
\newcommand{\isoto}{\xrightarrow{\sim}}
\newcommand{\ie}{i.\,e.~}
\begin{document}

\subjclass[2010]{14M27, 14L30}

\title
[Luna-Vust invariants of Cox rings and skeletons of spherical varieties]
{Luna-Vust invariants of Cox rings and skeletons of spherical varieties}

\author{Giuliano Gagliardi}
\address{Raymond and Beverly Sackler School of Mathematical Sciences,
  Tel Aviv University, 6997801 Tel Aviv, Israel}
\email{giulianog@mail.tau.ac.il}

\thanks{This research was partially supported by the Israel Science Foundation (grant No. 870/16)}

\begin{abstract}
  Given the Luna-Vust invariants of a spherical variety, we determine
  the Luna-Vust invariants of the spectrum of its Cox ring. In
  particular, we deduce an explicit description of the divisor class
  group of the Cox ring. It follows that every spherical variety
  admits iteration of Cox rings. Moreover, we obtain a combinatorial
  proof of the fact that the Cox ring is determined by the spherical
  skeleton, which is a known result following from the description of
  the Cox ring due to Brion. Finally, we show that a conjectural
  combinatorial smoothness criterion can be reduced to the case of a
  factorial affine spherical variety with a fixed point.
\end{abstract}

\maketitle

\section{Introduction}
\label{sec:intr}

Let $X$ be a normal irreducible algebraic variety over an
algebraically closed field $\C$ of characteristic zero and assume that
its divisor class group $\Cl(X)$ is finitely generated. The \emph{Cox
  ring} of $X$ is the graded $\C$-algebra defined as
\begin{align*}
  \Rm(X) \coloneqq \bigoplus_{[D] \in \Cl(X)} \OXD,
\end{align*}
where some care has to be taken in order to define the multiplication
law. In order for the Cox ring to be well-defined, it is in general
necessary to assume that $X$ has only constant invertible global
functions, \ie $\OXs = \C^*$. If the Cox ring $\Rm(X)$ is
finitely generated, then its spectrum $\overline{X} \coloneqq \Spec
\Rm(X)$ is a normal irreducible affine algebraic variety with the
action of a diagonalizable group $\Td$ having character group
$\Xf(\Td) \cong \Cl(X)$. Moreover, there exists an open subset
$\smash{\widehat{X}} \subseteq \overline{X}$ with complement of
codimension at least $2$ with a good quotient $\pi\colon
\smash{\widehat{X}} \to X$ for the $\Td$-action. For details, we refer
to the comprehensive reference \cite{adhl15}, in particular to
Sections~1.4 and 1.6.

The case where $X$ is a spherical variety has been studied by Brion
\cite{bri07}, who has in particular shown that the Cox ring is
finitely generated. In the present paper, we investigate certain
properties of the Cox ring of a spherical variety with the aim of
making progress towards a conjecture implying a smoothness criterion
\cite[Conjecture~1.4]{gh17}.

A \emph{spherical variety} is a normal variety $X$ equipped with the
action of a connected reductive algebraic group $G$ such that $X$
contains a dense orbit for a Borel subgroup $B \subseteq G$. In order
to define a $G$-action on the Cox ring $\Rm(X)$, there are two
approaches.

If one is interested in a canonical action, the definition of the Cox
ring has to be altered. This is done in \cite[Section~4]{bri07}, where
an equivariant version of the Cox ring of a spherical variety is
defined. It is graded by the group $\smash{\Cl^G(X)}$ of isomorphism
classes of $G$-linearized divisorial sheaves on $X$ instead of the
usual class group $\Cl(X)$.

Otherwise, using \cite[Section~4.2]{adhl15}, it is possible to find a
surjective homomorphism of connected reductive algebraic groups
$\overline{G} \to G$ and a $\overline{G}$-action on $\overline{X}$
(the spectrum of the usual Cox ring) such that $\overline{X}$ becomes
a spherical $\overline{G}$-variety and the quotient $\pi\colon
\smash{\widehat{X}} \to X$ becomes $\overline{G}$-equivariant. This is
the situation assumed in the present paper. In general, there are
several choices for $\overline{G}$ and for the $\overline{G}$-action
on $\overline{X}$, but we will see in Theorem~\ref{th:sskc} that they
do not differ substantially.

We will need the following standard definitions from the theory of
reductive groups: Let $T\subseteq B$ be a maximal torus, let
$R \subseteq \Xf(T)$ the root system of $G$ with respect to $T$, and
let $S \subseteq R$ be the set of simple roots corresponding to $B$.
For every $\alpha \in S$ we denote by $P_\alpha \subseteq G$ the
corresponding minimal parabolic subgroup containing $B$.

We can now introduce some combinatorial invariants from the Luna-Vust
theory of spherical embeddings \cite{kno91, lun01}. For general
surveys on the theory of spherical varieties, we refer to
\cite{tim00, per14}.

First, we denote by $\Mm \subseteq \Xf(B)$ the lattice of $B$-weights
of the $B$-semi-invariant rational functions in the function field
$\C(X)$. We consider the dual lattice $\Nm \coloneqq \Hom(\Mm, \Z)$
together with the natural pairing
$\langle \cdot, \cdot\rangle\colon \Nm \times \Mm \to \Z$.

Moreover, we denote by $\adiv$ the set of $B$-invariant prime divisors
in $X$. The set $\adiv$ is equipped with the map
$\rho\colon \adiv \to \Nm$ defined by
$\langle \rho(D), \chi \rangle \coloneqq \nu_D(f_\chi)$ where $\nu_D$
is the discrete valuation on $\C(X)^*$ induced by the prime divisor
$D$ and $f_\chi \in \C(X)$ is a $B$-semi-invariant rational function
of weight $\chi \in \Mm$. We denote by $\Pm(S)$ the set of subsets of
$S$. The set $\adiv$ is also equipped with the map
$\varsigma \colon \adiv \to \Pm(S)$ defined by
$\varsigma(D) \coloneqq \{\alpha \in S : P_\alpha \cdot D \ne D\}$.
From a combinatorial point of view, we will regard $\adiv$ as an
abstract finite set equipped with the two maps $\rho$ and $\varsigma$.

The $B$-invariant divisors which are not $G$-invariant are called the
\emph{colors} of $X$ and form a subset $\bdiv \subseteq \adiv$. Note
that a $B$-invariant prime divisor $D$ is $G$-invariant, \ie
$D \in \adiv \setminus \bdiv$, if and only if
$\varsigma(D) = \emptyset$.

Finally, we denote by $\Vm$ the set of $G$-invariant discrete
valuations $\nu\colon \C(X)^* \to \Q$. The map
$\iota\colon \Vm \to \Nm_\Q \coloneqq \Nm \otimes_\Zd \Qd$ defined by
$\langle \iota(\nu), \chi \rangle \coloneqq \nu(f_\chi)$ is injective,
so that we may consider $\Vm$ as a subset of $\Nm_\Q$. Then $\Vm$ is a
polyhedral cone and its dual cone
$-\Tm \coloneqq \Vm^\vee \subseteq \Mm_\Q \coloneqq \Mm \otimes_\Zd
\Qd$ is simplicial.

In Section~\ref{sec:cmc}, we explicitly determine the Luna-Vust
invariants of $\overline{X}$ from the Luna-Vust invariants of $X$.
Then, in Theorem~\ref{th:cl}, we obtain an explicit description of the
divisor class group of $\overline{X}$. In particular, we determine
exactly when the Cox ring $\Rm(X)$ is factorial.

We may iterate the Cox ring construction by defining
$\Rm_1(X) \coloneqq \Rm(X)$ and
$\Rm_k \coloneqq \Rm(\Spec \Rm_{k-1}(X))$. If the ring $\Rm_{k-1}(X)$
is factorial, then we have $\Rm_k(X) = \Rm_{k-1}(X)$, because
$\Cl(\Spec \Rm_{k-1}(X)) = 0$. The variety $X$ is said to \emph{admit
  iteration of Cox rings} if after finitely many iterations we obtain
a factorial ring. For details, we refer to \cite{hw18}. The following
result follows immediately from Theorem~\ref{th:cl}:

\begin{theorem}
  Let $X$ be a spherical variety with $\OXs = \C^*$. Then the ring
  $\Rm_2(X) = \Rm(\overline{X})$ is factorial. In particular, the
  variety $X$ admits iteration of Cox rings.
\end{theorem}

Let $H$ be the stabilizer of a point in the open $G$-orbit of the
spherical variety $X$. Then the subgroup $H \subseteq G$ is called a
\emph{spherical subgroup}, the orbit $G/H$ is called a \emph{spherical
  homogeneous space}, and $G/H \hookrightarrow X$ is called a
\emph{spherical embedding}. In some cases, the Cox rings of all
embeddings of a given spherical homogeneous space are factorial. This
is illustrated by the following generalization of
\cite[Theorem~3.7]{gag14}, which is a corollary of Theorem~\ref{th:cl}
and \cite[Section~6]{hofd}.

\begin{cor}
  \label{cor:ccbf}
  Let $G/H \hookrightarrow X$ be a spherical embedding with
  $\OXs = \C^*$, and let $H^\circ$ denote the identity component of
  $H$. If the natural map $G/H^\circ \to G/H$ induces a bijection on
  colors $\smash{\Dm(G/H) \isoto \Dm(G/H^\circ)}$, then the Cox ring $\Rm(X)$
  is factorial.
\end{cor}

According to \cite[Section~5]{gh17}, we may assign to any spherical
variety $X$ a combinatorial object called its \emph{spherical
  skeleton}. We define
$\Lambda_\Q \coloneqq \lspan_\Q \Tm \subseteq \Mm_\Q$ with dual space
$\Lambda_\Qd^* \coloneqq \Hom(\Lambda_\Qd, \Qd)$. We write
$\rho_\Q\colon \adiv \to \Nm_\Q$ for the composition of
$\rho\colon \adiv \to \Nm$ with the natural map $\Nm \to \Nm_\Q$. Note
that we have $\Tm \subseteq \Lambda_\Qd \subseteq \lspan_\Q R$. We
will abbreviate the notation $\rho_\Q(D)|_{\Lambda_\Q}$ by $\cfr(D)$.

\begin{definition}
  \label{def:ssk}
  The \emph{spherical skeleton} of $X$ is the tuple
  $\Rs_X \coloneqq (R, S, \Tm, \adiv)$ where $\adiv$ is regarded as an
  abstract finite set equipped with the maps
  $\cfr\colon \adiv \to \Lambda_\Q^*$ and
  $\varsigma\colon \adiv \to \Pm(S)$.
\end{definition}

For further discussion, we refer to Section~\ref{sec:ssk}. An
important property of the spherical skeleton $\Rs_X$ is that it
determines the Cox ring $\Rm(X)$ regarded as a non-graded
$\C$-algebra. We will given another combinatorial proof of this known
result due to Brion.

\begin{theorem}[{\cite[4.3.2]{bri07}}]
  \label{thm:skr}
  The spherical skeleton $\Rs_X$ up to isomorphism determines the Cox
  ring $\Rm(X)$ up to isomorphism of (non-graded) $\C$-algebras.
\end{theorem}

According to \cite[Definition~5.3]{gh17}, it is possible to assign to
any spherical variety~$X$ a certain combinatorial invariant
$\is{X} \in \Q_{\ge 0} \cup \{\infty\}$. If $X$ is an affine spherical
variety, we denote by $\Mm^+\subseteq \Mm$ the monoid of weights of
the $B$-semi-invariant regular functions in the coordinate ring
$\C[X]$. If, moreover, $X$ is a factorial affine variety, \ie an
affine variety with factorial coordinate ring, then there exists a
regular function $f_\lambda \in \C[X]$ such that $\Div f_\lambda$ is
the standard expression for the anticanonical divisor of $X$
introduced in \cite{bri97}. For details, we refer to
Section~\ref{sec:css}. As we always assume that $\C[X]^* = \C^*$, the
regular function $f_\lambda$ is determined up to a constant factor,
hence its weight $\lambda \in \Mm$ is uniquely determined. In this
case we can define $\is{X}$ as follows:

\begin{definition}
  \label{def:isfact}
  Let $X$ be a factorial affine spherical $G$-variety with
  $\C[X]^* = \C^*$. We define
  \begin{align*}
    \is{X} \coloneqq \sup
    \rleft\{
    \sum_{D \in \adiv}\langle \rho(D), \vartheta \rangle :
    \vartheta \in (\lambda + \Tm) \cap \cone(\Mm^+)
    \rright\} - \rank X\text{.}
  \end{align*}
\end{definition}

The general definition of $\is{X}$ is explained in
Section~\ref{sec:css}. It is used in the following conjecture, which
is known to imply a smoothness criterion for spherical varieties. For
details, we refer to \cite[Section~7]{gh17}.

\begin{conj}[{\cite[Conjecture~1.4]{gh17}}]
  \label{conj:sscorig}
  Let $X$ be a complete spherical variety. Then we have
  \begin{align*}
    \is{X} \le \dim X - \rank X\text{,}
  \end{align*}
  where equality holds if and only if $X$ is isomorphic to a toric
  variety.
\end{conj}

Here we state another conjecture, which seems to be easier to prove (or
disprove):

\begin{conj}
  \label{conj:sscaff}
  Let $X$ be a factorial affine spherical $G$-variety with a $G$-fixed point.
  Then we have
  \begin{align*}
    \is{X} \le \dim X - \rank X\text{,}
  \end{align*}
  where equality holds if and only if $X$ is isomorphic to an affine
  space.
\end{conj}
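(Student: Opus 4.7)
The plan is to translate the bound $\is{X} \le \dim X - \rank X$ into a linear-programming statement on $\Mm_\Q$ and then exploit the rigid combinatorial structure forced by factoriality together with the existence of a $G$-fixed point.

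First I would unpack the hypotheses. Since $X$ is affine with $\C[X]^\ast = \C^\ast$ and factorial, each $D \in \adiv$ is principal and is the divisor of a $B$-semi-invariant $f_{\chi_D} \in \C[X]$ of weight $\chi_D \in \Mm^+$; unique factorization then yields $\langle \rho(D), \chi_{D'} \rangle = \delta_{D D'}$. The $G$-fixed point makes $\cone(\Mm^+)$ full-dimensional, so $\{\chi_D\}_{D \in \adiv}$ is a $\Z$-basis of $\Mm$ and $\Mm^+ = \bigoplus_{D \in \adiv} \Nd\, \chi_D$; in particular $|\adiv| = \rank X$. Brion's formula then yields $\lambda = \sum_D a_D \chi_D$ with $a_D \in \Nd_{>0}$, and $a_D = 1$ whenever $D$ is $G$-invariant.

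Next I would rewrite the supremum using the spherical roots $\sigma \in \Sigma_X \subseteq \Mm$ that generate $-\Tm = \Vm^\vee$. Writing $\vartheta = \lambda - \sum_{\sigma \in \Sigma_X} t_\sigma \sigma$ with $t_\sigma \ge 0$ (which is exactly the condition $\vartheta - \lambda \in \Tm$) and expanding $\sigma = \sum_D \sigma^D \chi_D$, the feasible region $(\lambda + \Tm) \cap \cone(\Mm^+)$ is cut out by $t_\sigma \ge 0$ and $a_D - \sum_\sigma t_\sigma \sigma^D \ge 0$, and the objective function becomes
\begin{align*}
  \sum_{D \in \adiv} \langle \rho(D), \vartheta \rangle
  = \sum_D a_D - \sum_{\sigma \in \Sigma_X} t_\sigma \Bigl( \sum_D \sigma^D \Bigr).
\end{align*}
The supremum is thus attained at a vertex of this explicit simplicial LP, and I would compute it as a concrete non-negative combination of the $a_D$ and of the pairings of the color weights $\rho(D)$ against the spherical roots.

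Third, I would match this LP optimum against the dimension formula $\dim X = \rank X + \dim G/P$, where $P \supseteq B$ is the stabilizer of the open $B$-orbit. Brion's explicit expression of each $a_D$ in terms of simple-root pairings, combined with Luna's description of how the spherical roots encode the geometry of $G/P$, should reduce the inequality $\is{X} \le \dim X - \rank X$ to a root-theoretic bound that can be checked within each Luna class of spherical systems.

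The main obstacle will be the equality case. Equality forces every term in the vertex expansion to be extremal, which constrains the root data, the parabolic $P$, and the color configuration so severely that, together with the free-monoid structure of $\Mm^+$, one should be pushed into the toric, freely-generated situation of an affine space. Turning these combinatorial extremality conditions into the geometric conclusion $X \cong \Ad^{\dim X}$ — and in particular ruling out spherical varieties that look combinatorially maximal without actually being affine spaces — is the delicate part, and will presumably rely on the Luna classification of spherical systems of small rank together with a case analysis.
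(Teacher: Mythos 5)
There is a fundamental mismatch here that you should be aware of before any technical discussion: the statement you set out to prove is Conjecture~\ref{conj:sscaff}, which is an \emph{open conjecture}. The paper contains no proof of it; its contribution (Theorem~\ref{th:ssc}) is only the \emph{equivalence} of this restricted statement with the general conjecture on spherical skeletons \cite[Conjecture~5.5]{ghmuk}, established via Proposition~\ref{prop:compfixed}, Remark~\ref{rem:eqdef} and Lemma~\ref{le:fact}. So there is no proof in the paper to compare yours against, and a correct complete argument for this statement would be a substantial new result, not a reconstruction of anything in the paper.

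Judged on its own terms, your proposal is a plan of attack rather than a proof, and its gaps sit exactly where the open difficulty lies. The preliminary reductions are essentially sound: factoriality (not the fixed point) gives, via the Cox-ring results quoted in Proposition~\ref{prop:mbar}, that $\Mm^+$ is freely generated by weights $\chi_D$, $D \in \adiv$, forming a $\Z$-basis of $\Mm$, hence $|\adiv| = \rank X$ and $\lambda = \sum_D m_D \chi_D$. (Two slips: the $G$-fixed point corresponds to \emph{completeness} of the spherical skeleton, cf.\ Proposition~\ref{prop:compfixed}, whereas full-dimensionality of $\cone(\Mm^+)$ is automatic for any affine spherical variety since $\Mm = \Mm^+ - \Mm^+$; and with the paper's convention $\Tm = \cone(\Sigma)$, points of $\lambda + \Tm$ are $\lambda + \sum_\sigma t_\sigma \sigma$ with $t_\sigma \ge 0$, not $\lambda - \sum_\sigma t_\sigma \sigma$.) More seriously, your claim that the supremum \enquote{is attained at a vertex of this explicit simplicial LP} presupposes that $\is{X}$ is finite, but finiteness is part of what must be proved: the feasible region is in general unbounded and the definition explicitly allows $\is{X} = \infty$. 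Finally, the two decisive steps --- deriving the inequality from \enquote{a root-theoretic bound that can be checked within each Luna class}, and settling the equality case by \enquote{the Luna classification \dots\ together with a case analysis} --- are not arguments but restatements of the problem; no bound is actually derived, the rank is not bounded so no finite case check is available, and characterizing affine space among the combinatorially extremal configurations (equivalently, recognizing the spherical skeletons of multiplicity-free spaces) is precisely the content of the conjecture. The honest conclusion is that the statement remains open and your outline, while pointing in a reasonable direction, does not close it.
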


\begin{theorem}
  \label{th:ssc}
  Conjecture~\ref{conj:sscaff} is equivalent to
  Conjecture~\ref{conj:sscorig}.
\end{theorem}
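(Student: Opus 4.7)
The plan is to use the Cox ring construction itself as the reduction device. One direction is immediate: Conjecture~\ref{conj:sscaff} is the restriction of \cite[Conjecture~5.5]{ghmuk} to factorial affine spherical varieties with a fixed point, so the general conjecture trivially implies the factorial affine one.

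For the converse, I would start with a spherical variety $X$ to which \cite[Conjecture~5.5]{ghmuk} applies and pass to $\overline{X} = \Spec \Rm(X)$. By the main constructions of this paper (Section~\ref{sec:cmc}) together with Theorem~\ref{th:cl}, $\overline{X}$ is an affine spherical $\overline{G}$-variety which is factorial. Since $\Rm(X)$ carries a $\Cl(X)$-grading preserved by the $\overline{G}$-action, the origin of $\overline{X}$ is naturally a $\overline{G}$-fixed point, placing $\overline{X}$ within the scope of Conjecture~\ref{conj:sscaff}.

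The heart of the argument is to compare the relevant invariants of $X$ and $\overline{X}$. Using Remark~\ref{rem:eqdef} to identify Definition~\ref{def:isfact} with the general \cite[Definition~5.3]{ghmuk} on the factorial side, and combining this with the explicit determination of the Luna-Vust data of $\overline{X}$ from Section~\ref{sec:cmc}, one would prove
\begin{align*}
  \is{X} = \is{\overline{X}} \qquad \text{and} \qquad \dim X - \rank X = \dim \overline{X} - \rank \overline{X}\text{.}
\end{align*}
The second identity follows from $\dim \overline{X} - \dim X = \rank \Cl(X) = \rank \overline{X} - \rank X$, \ie both quantities grow by the dimension (equivalently rank) of the quasitorus $\Td$. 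The first identity should amount to showing that the distinguished anticanonical section and the weight $\lambda$ appearing in Definition~\ref{def:isfact} lift compatibly under $\pi\colon \smash{\widehat{X}} \to X$, so that the supremum defining $\is{\overline{X}}$ matches the one defining $\is{X}$ after accounting for the additional $\Td$-weights. Granted these identities, the inequality of Conjecture~\ref{conj:sscaff} applied to $\overline{X}$ translates into the inequality of \cite[Conjecture~5.5]{ghmuk} for $X$.

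The main obstacle is the equality case: one must check that $\overline{X}$ is an affine space if and only if $X$ is the extremal example predicted by \cite[Conjecture~5.5]{ghmuk}. The easy direction is to compute the Cox ring of the extremal $X$ directly and observe that it is polynomial. The harder direction requires recovering $X$ from $\overline{X} \cong \Ad^m$ as the good quotient $\smash{\widehat{X}} \to X$ of the appropriate open subset by the $\Td$-action, which in Luna-Vust terms amounts to inverting the formulas of Section~\ref{sec:cmc} to read off the colored fan of $X$ from that of $\overline{X}$ and verifying that this forces $X$ to be of the extremal shape. I expect the bulk of the technical work to lie here and in the verification of $\is{X} = \is{\overline{X}}$, both of which should be direct combinatorial consequences of the descriptions established earlier in the paper.
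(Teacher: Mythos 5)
Your reduction device --- passing from $X$ to $\overline{X} = \Spec \Rm(X)$ and applying Conjecture~\ref{conj:sscaff} there --- is not the paper's route, and it fails at exactly the step you describe as the heart of the argument. First, a repairable problem: $\overline{X}$ is in general \emph{not} factorial, since by Theorem~\ref{th:cl} we have $\Cl(\overline{X}) \cong \Zd^{\srad}$, which is nonzero whenever $\srad \ne \emptyset$; so Conjecture~\ref{conj:sscaff} does not even apply to $\overline{X}$ (one could pass to $\overline{\overline{X}}$ instead, which is factorial because $\Zd^{\srad}$ is free, cf.\ Corollary~\ref{cor:rr}). Second, and fatally: the identity $\is{X} = \is{\overline{X}}$ is false. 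By Remark~\ref{rem:rbar}, the skeleton $\Rs_{\,\smash{\overline{X}}}$ arises from $\Rs_X$ by replacing each color $D \in \Dm^{\srad}$ by two colors with the same invariants $\cfr$ and $\varsigma$; the constraint set $\Qm^*$ of Definition~\ref{def:is} is unchanged, but the defining sum acquires an extra summand $\langle \cfr(D), \vartheta \rangle$ for each such $D$, and these need not vanish at the optimum. The paper's own computations provide counterexamples: if $X_i$ realizes the skeleton $\Rs_i$ for $i \in \{21, 22, 38\}$ (obtained from the entries $(21)$, $(22)$, $(38)$ of the list in \cite[Section~2]{gag15} by merging the two colors of type $a$ into one color of type $2a$), then $\Rs_{\,\smash{\overline{X}_i}} \cong \Rs'_i$, while $\is{\Rs_{21}} = 0 \ne 1 = \is{\Rs'_{21}}$, $\is{\Rs_{22}} = n-2 \ne 2n-3 = \is{\Rs'_{22}}$, and $\is{\Rs_{38}} = 2n-3 \ne 4n-4 = \is{\Rs'_{38}}$. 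The same examples destroy your plan for the equality case: in them $\Rs_{\,\smash{\overline{X}}}$ is linear (the Cox ring is a polynomial ring) while $\Rs_X$ is not, so a polynomial Cox ring does \emph{not} force $X$ to be the extremal case, and no inversion of the formulas of Section~\ref{sec:cmc} can show otherwise.

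What the paper does instead is a purely combinatorial reduction, decoupled from the Cox ring construction. After using Proposition~\ref{prop:compfixed} and Remark~\ref{rem:eqdef} as a dictionary (this part matches your first paragraph), Lemma~\ref{le:fact} modifies a complete skeleton $\Rs$ one $\alpha \in \srad$ at a time by adding \emph{either} a duplicate color (the Cox-ring operation) \emph{or} a new $G$-invariant divisor $D'$ with $\langle \cfr(D'), 2\alpha \rangle = -1$, the choice depending on whether an optimal $\vartheta$ can be arranged with $\langle \cfr(D_{2\alpha}), \vartheta \rangle \ge 0$ or with $\langle \alpha^*, \vartheta \rangle = 0$. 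The need for the second option reflects precisely the failure of your identity: not even the inequality $\is{\Rs} \le \is{\Rs_{\,\smash{\overline{X}}}}$ is available, so one cannot always perform the Cox-ring operation. This yields a factorial complete $\Rs'$ with only an inequality $\is{\Rs} \le \is{\Rs'}$, which suffices to transfer the bound; the equality case is then settled, not by any transfer principle, but by explicitly computing $\wp$ for the finitely many configurations in which $\Rs \ncong \Rs'$ and $\is{\Rs'} = \dim G/P$ can occur --- entries $(5)$, $(21)$, $(22)$, $(38)$ of \cite[Section~2]{gag15} --- and observing that equality fails for the non-linear ones. Any proof along your lines would have to supply substitutes for both of these ingredients.
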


\section{The Luna-Vust invariants of the spectrum of the Cox ring}
\label{sec:cmc}

We now recall some notation from \cite[Section~6]{gh17}. As always, we
assume that the spherical variety $X$ has only constant invertible
global functions, \ie $\OXs = \C^*$. We denote by $\Rm(X)$ the Cox
ring of $X$. We write $\overline{X} \coloneqq \Spec \Rm(X)$ and denote
by $\widehat{X} \subseteq \overline{X}$ the open subset (with
complement of codimension at least $2$) such that there exists a good
quotient $\pi\colon \widehat{X} \to X$ for the action of the
diagonalizable group $\Td \coloneqq \Spec \C[\Cl(X)]$.

According to \cite[Theorem~4.2.3.2]{adhl15}, there exist a connected
reductive algebraic group $G'$, a finite epimorphism
$\varepsilon\colon G' \to G$, and a $G'$-action on $\widehat{X}$
(hence also on $\overline{X}$) commuting with the $\Td$-action such
that $\pi \colon \smash{\widehat{X}} \to X$ becomes
$\overline{G}$-equivariant for the action of
$\overline{G} \coloneqq G' \times \Td^\circ$ on $X$ via
$\varepsilon \colon \overline{G} \to G$. There is a Borel subgroup
$\overline{B} \subseteq \overline{G}$ and a maximal torus
$\overline{T} \subseteq \overline{B}$ such that
$\varepsilon(\overline{B}) = B$ and $\varepsilon(\overline{T}) = T$.
We identify the corresponding root systems and sets of simple roots of
$G$ and $\overline{G}$ via $\varepsilon$.

The $\overline{G}$-variety $\overline{X}$ is spherical, and we use the
same notation as we have introduced for $X$, but we add a line over
the respective symbol. For instance, we have the weight lattice
$\overline{\Mm}$, the set of $\overline{B}$-invariant prime divisors
$\overline{\adiv}$, and the valuation cone $\overline{\Vm}$.

We have a natural injective pullback map $\pi^*\colon \Mm \to
\overline{\Mm}$ and the corresponding dual map $\pi_*\colon
\overline{\Nm} \to \Nm$. For every $D \in \adiv$ the canonical section
$1_D \in \OXD \hookrightarrow \Rm(X)$ is $\overline{B}$-semi-invariant
and is defined in $\Rm(X)$ up to a constant factor. We denote its
weight by $e_D \in \overline{\Mm}$.

\begin{prop}
  \label{prop:mbar}
  The weights $e_D$ for $D \in \adiv$ generate the the weight monoid
  $\overline{\Mm}{}^+$ of the affine spherical variety
  $\overline{X} = \Spec \Rm(X)$. Moreover, they form a $\Zd$-basis of
  $\overline{\Mm}$.
\end{prop}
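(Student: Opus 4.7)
My plan is to reduce the proposition to the following claim: every nonzero $\overline{B}$-semi-invariant regular function $h \in \Rm(X)$ is, up to a nonzero scalar, equal to a monomial $\prod_{D \in \adiv} 1_D^{a_D}$ with uniquely determined non-negative integers $a_D$. Granted this, monoid generation of $\overline{\Mm}^+$ by $\{e_D : D \in \adiv\}$ is immediate; the full lattice statement then follows because $\overline{X}$ is affine, so $\overline{\Mm}$ is the group generated by $\overline{\Mm}^+$; and $\Zd$-linear independence of the $e_D$ will come from the uniqueness of the exponents.

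To prove the claim, I will first show that $h$ is homogeneous for the $\Cl(X)$-grading on $\Rm(X)$. Since the grading is induced by the $\Td$-action and $\Td^\circ \subseteq \overline{B}$, the element $h$ is at least $\Td^\circ$-homogeneous, so its decomposition $h = \sum h_i$ into $\Cl(X)$-graded components consists of finitely many pieces whose classes all have the same image in $\Xf(\Td^\circ)$. Each $h_i$ inherits the $\overline{B}$-weight of $h$, so multiplicity-freeness of the spherical $\overline{G}$-variety $\overline{X}$ forces at most one $h_i$ to be nonzero. Thus $h \in \Gamma(X, \Om(E))$ for a single class $[E] \in \Cl(X)$, and since $\Cl(X)$ of a spherical variety is generated by the classes of $B$-invariant prime divisors, I may choose the representative $E$ to be $B$-invariant and supported on $\adiv$. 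Writing $h = f \cdot 1_E$ for a $B$-semi-invariant rational function $f \in \C(X)$ with $\Div f + E \ge 0$, the divisor $E' \coloneqq E + \Div f = \sum_{D \in \adiv} a_D D$ is effective, supported on $\adiv$, and linearly equivalent to $E$. The monomial $\prod_{D \in \adiv} 1_D^{a_D}$ agrees up to a scalar with the canonical section $1_{E'}$ of $\Om(E')$, is $\overline{B}$-semi-invariant of the same weight as $h$, and lies in the same graded piece; multiplicity-freeness then gives $h = c \prod_{D \in \adiv} 1_D^{a_D}$ for some $c \in \C^*$. The exponents $a_D$ are uniquely determined since they are read off from $E'$, which is intrinsic to $h$.

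The linear independence of $\{e_D : D \in \adiv\}$ in $\overline{\Mm}$ then falls out: given $\sum_D n_D e_D = 0$, I split $n_D = a_D - b_D$ with $a_D, b_D \ge 0$, apply the claim to deduce that $\prod_D 1_D^{a_D}$ and $\prod_D 1_D^{b_D}$ are proportional (same weight, both nonzero), compare their divisors $\sum a_D D = \sum b_D D$ on $X$, and conclude $a_D = b_D$, hence $n_D = 0$. The main obstacle I anticipate is the first reduction step---ensuring that $\overline{B}$-semi-invariance of $h$ forces homogeneity for the full $\Cl(X)$-grading rather than merely for $\Xf(\Td^\circ)$---together with the careful bookkeeping as one moves between $B$-invariant effective divisors on $X$ and their canonical sections inside $\Rm(X)$.
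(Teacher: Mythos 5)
Your route is genuinely different from the paper's: the paper disposes of this proposition in two citations (linear independence and monoid generation of $\overline{\Mm}{}^+$ from \cite[Theorem~4.5.4.6]{adhl15}, lattice generation from \cite[Proposition~5.14]{tim11}), whereas you are in effect re-proving the relevant special case of the first citation directly. Most of your plan is sound: the reduction to $\Cl(X)$-homogeneity of $h$ via the commuting $\Td$- and $\overline{G}$-actions plus multiplicity-freeness is correct; so are the choice of a $B$-invariant representative $E$ (using that $\Cl(X)$ is generated by $\adiv$), the observation that $E' \coloneqq E + \Div f$ is effective and supported on $\adiv$ (because $\Div f$ is $B$-invariant), and the passage from $\overline{\Mm}{}^+$ to $\overline{\Mm}$ by quasi-affineness, which is exactly Timashev's proposition.

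The gap sits at the crux, in your last step, not in the reduction you flagged as the main obstacle. You assert that the monomial $m \coloneqq \prod_{D} 1_D^{a_D}$ is \enquote{$\overline{B}$-semi-invariant of the same weight as $h$} and then invoke multiplicity-freeness. But multiplicity-freeness only identifies eigenvectors of \emph{equal} weight, so on a spherical variety \enquote{same weight} is equivalent to the proportionality $h = c\,m$ you are trying to prove: as written you assume the conclusion. Moreover, the two natural shortcuts are unavailable without circularity relative to the paper's logical order, since both the identity $\pi^*(\chi) = \sum_{D} \langle \rho(D), \chi\rangle e_D$ and the fact $\Rm(X)^* = \C^*$ are proved in the paper \emph{after}, and using, this very proposition. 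The repair is to compare divisors instead of weights: by \cite[Proposition~1.6.2.1]{adhl15} we have $\Div 1_D = \pi^*(D)$, hence $\Div_{\overline{X}}(m) = \pi^*(E')$, while $h = \pi^*(f)\cdot 1_E$ in the fraction field of $\Rm(X)$ gives $\Div_{\overline{X}}(h) = \pi^*(\Div_X f) + \pi^*(E) = \pi^*(E')$ as well. Thus $u \coloneqq h/m$ has divisor zero on the normal affine variety $\overline{X}$, so $u$ and $u^{-1}$ are both regular; $u$ is homogeneous of degree $[E]-[E'] = 0$, so $u, u^{-1}$ lie in the degree-zero piece $\Rm(X)_0 = \Gamma(X, \Om_X)$, whence $u \in \Gamma(X,\Om^*) = \C^*$ by the standing assumption on $X$. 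This closes your claim, makes \enquote{same weight} a corollary rather than an input, and the rest of your argument (monoid generation, uniqueness of exponents, linear independence) then goes through.
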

\begin{proof}
  According to \cite[Theorem~4.5.4.6]{adhl15}, the $e_D$ are linearly
  independent and generate the weight monoid $\overline{\Mm}{}^+$. It
  follows from \cite[Proposition~5.14]{tim11} that they also generate
  $\overline{\Mm}$ over $\Zd$.
\end{proof}

\begin{prop}
  \label{prop:constinv}
  We have $\Rm(X)^* = \C^*$.
\end{prop}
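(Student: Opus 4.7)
The plan is to show that any element of $\Rm(X)^*$ must be a $\overline{B}$-semi-invariant, then use Proposition~\ref{prop:mbar} to pin down its weight, and finally use sphericity to conclude it is a constant.

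Let $f \in \Rm(X)^*$. Since $\overline{G}$ is connected and $\overline{X} = \Spec \Rm(X)$ is irreducible and affine, I would invoke the classical fact (due to Rosenlicht) that every unit of $\C[\overline{X}]$ is a $\overline{G}$-semi-invariant: the morphism $(g, x) \mapsto f(g \cdot x)$ is a unit on $\overline{G} \times \overline{X}$, and by the decomposition of units on a product of irreducible varieties it factors (up to a constant) as $\alpha(g)\beta(x)$; specializing $g = e$ identifies $\beta$ with a scalar multiple of $f$ and turns $g \mapsto \alpha(g)/\alpha(e)$ into a character of $\overline{G}$. In particular, $f$ is $\overline{B}$-semi-invariant of some weight $\chi \in \overline{\Mm}$, which lies in $\overline{\Mm}{}^+$ because $f$ is regular. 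The same argument applied to $f^{-1}$ yields $-\chi \in \overline{\Mm}{}^+$.

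By Proposition~\ref{prop:mbar}, the monoid $\overline{\Mm}{}^+$ is generated by elements forming a $\Zd$-basis of $\overline{\Mm}$, so it is strongly convex, and hence $\overline{\Mm}{}^+ \cap (-\overline{\Mm}{}^+) = \{0\}$. Therefore $\chi = 0$ and $f$ is $\overline{B}$-invariant. Since $\overline{X}$ is a spherical $\overline{G}$-variety, it contains an open $\overline{B}$-orbit, so every $\overline{B}$-invariant rational function is constant, and we conclude $f \in \C^*$. The only non-routine step is the Rosenlicht-type argument for $\overline{G}$-semi-invariance of units; once this is in hand, the conclusion is immediate from Proposition~\ref{prop:mbar} and the definition of sphericity.
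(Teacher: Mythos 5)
Your proof is correct and follows essentially the same route as the paper: the paper's proof simply observes that by Proposition~\ref{prop:mbar} the cone spanned by $\overline{\Mm}{}^+$ is strictly convex and that this implies $\Rm(X)^* = \C^*$, leaving implicit exactly the standard argument you spell out (Rosenlicht semi-invariance of units, membership of $\pm\chi$ in the weight monoid, and constancy of $\overline{B}$-invariant functions on a spherical variety). So your write-up is just the expanded version of the paper's one-line proof.
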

\begin{proof}
  According to Proposition~\ref{prop:mbar}, the cone spanned by the
  weight monoid $\overline{\Mm}{}^+$ in $\overline{\Mm}_\Q$ is
  strictly convex, which implies $\Rm(X)^* = \OXbars = \C^*$.
\end{proof}

We denote by $(e_D^*)_{D\in\adiv}$ the basis of the lattice
$\overline{\Nm}$ which is dual to the basis $(e_D)_{D\in\adiv}$ of the
lattice $\overline{\Mm}$.

\begin{prop}
  For every $D\in\Delta$ we have $\pi_*(e_D^*) = \rho(D)$.
\end{prop}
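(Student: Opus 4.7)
The plan is to check the equality by pairing with an arbitrary $\chi \in \Mm$. By the adjunction between $\pi^*$ and $\pi_*$, we have
\[
\langle \pi_*(e_D^*), \chi \rangle = \langle e_D^*, \pi^*(\chi) \rangle,
\]
and since $(e_D^*)_{D \in \adiv}$ is dual to $(e_D)_{D \in \adiv}$, this coefficient picks out the multiplicity of $e_D$ when $\pi^*(\chi) \in \overline{\Mm}$ is expanded in the $\Z$-basis furnished by Proposition~\ref{prop:mbar}. The goal therefore reduces to showing
\[
\pi^*(\chi) = \sum_{E \in \adiv} \nu_E(f_\chi)\, e_E
\]
in $\overline{\Mm}$, where $f_\chi \in \C(X)$ is any $B$-semi-invariant rational function of weight $\chi$.

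To obtain this identity I would translate it into an equality of $\overline{B}$-semi-invariants in (the field of fractions of) $\Rm(X)$. On one side, the pullback $\pi^*(f_\chi)$ is a $\overline{B}$-semi-invariant rational function on $\overline{X}$ of weight $\pi^*(\chi)$. On the other side, the product $\prod_E 1_E^{\nu_E(f_\chi)}$ of canonical sections (with negative exponents allowed in the fraction field) is a $\overline{B}$-semi-invariant of weight $\sum_E \nu_E(f_\chi)\, e_E$. By the very construction of the Cox ring recalled in \cite[Sections~1.4 and 1.6]{adhl15}, the isomorphism between rational functions on $\overline{X}$ that are invariant under $\Td$ and rational functions on $X$ identifies $\pi^*(f_\chi)$ with $\prod_E 1_E^{\nu_E(f_\chi)}$ up to a nonzero constant: indeed both have divisor equal to the $\Td$-homogeneous lift of $\Div f_\chi = \sum_E \nu_E(f_\chi) E$, and $\Rm(X)^* = \C^*$ by Proposition~\ref{prop:constinv}. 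Comparing $\overline{B}$-weights of these two expressions gives the displayed formula for $\pi^*(\chi)$.

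Combining the two steps yields $\langle \pi_*(e_D^*), \chi\rangle = \nu_D(f_\chi) = \langle \rho(D), \chi\rangle$ for every $\chi \in \Mm$, hence $\pi_*(e_D^*) = \rho(D)$. The only nontrivial point is the identification $\pi^*(f_\chi) \doteq \prod_E 1_E^{\nu_E(f_\chi)}$ in $\Rm(X)$, which is a standard consequence of the definition of the Cox ring; the rest is just a dualization argument.
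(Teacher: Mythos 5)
Your proposal is correct and follows essentially the same route as the paper: pair against $\chi \in \Mm$, reduce to the identity $\pi^*(\chi) = \sum_{D\in\adiv}\langle \rho(D),\chi\rangle e_D$, and prove it by comparing $\pi^*(f_\chi)$ with $\prod_{D}1_D^{\langle\rho(D),\chi\rangle}$ via their divisors (using $\pi^*(D)=\Div 1_D$, which the paper pins down as \cite[Proposition~1.6.2.1]{adhl15}) and the fact $\Rm(X)^*=\C^*$ from Proposition~\ref{prop:constinv}. No gaps; only your citation of the key divisor identity is less precise than the paper's.
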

\begin{proof}
  Let $f_\chi \in \C(X)$ be $B$-semi-invariant of weight
  $\chi \in \Mm$. Then we have
  \begin{align*}
    \Div f_\chi = \sum_{D\in\adiv} \langle \rho(D), \chi\rangle D\text{.}
  \end{align*}
  According to \cite[Proposition~1.6.2.1]{adhl15}, we have
  $\pi^*(D) = \Div 1_D$, so that
  \begin{align*}
    \Div \pi^*(f_\chi) = \sum_{D\in\adiv} \langle \rho(D), \chi\rangle \Div 1_D \text{.}
  \end{align*}
  As we have $\Rm(X)^* = \C^*$ by Proposition~\ref{prop:constinv}, it
  follows that $\pi^*(f_\chi)$ coincides with
  $\smash{\prod_{D\in\adiv} 1_D^{\langle \rho(D), \chi \rangle}}$ up
  to a constant factor, \ie we have
  \begin{align*}
    \pi^*(\chi) = \sum_{D\in\adiv}\langle \rho(D), \chi\rangle e_D\text{.}
  \end{align*}
  Consequently, for every $D \in \adiv$ we have
  $\langle \pi_*(e_D^*), \chi \rangle = \langle e_D^*, \pi^*(\chi)
  \rangle = \langle \rho(D), \chi \rangle$.
\end{proof}

\begin{lemma}
  \label{le:fsi}
  Every $\overline{B}$-semi-invariant $f \in \C(\overline{X})$ is also
  $\Td$-semi-invariant.
\end{lemma}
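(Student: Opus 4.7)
The plan is to exploit the sphericity of $\overline{X}$ as a $\overline{G}$-variety, which by the standard function-field characterization means that for every weight $\chi \in \overline{\Mm}$, the $\C$-vector space of $\overline{B}$-semi-invariants of weight $\chi$ in $\C(\overline{X})$ is at most one-dimensional. Equivalently, a non-zero $\overline{B}$-semi-invariant in $\C(\overline{X})$ is uniquely determined by its weight up to a scalar factor.

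By construction, the $\Td$-action on $\overline{X}$ commutes with the $\overline{G}$-action (this is built into the definition $\overline{G} = G' \times \Td^\circ$ from the paragraph preceding Proposition~\ref{prop:mbar}, where the $G'$-action is chosen to commute with the $\Td$-action). Therefore the induced actions on $\C(\overline{X})$ commute as well, and in particular $\Td$ preserves each $\overline{B}$-eigenspace. Applied to the given $f$, the line $\C \cdot f$ is $\Td$-stable, and any linear action of a quasitorus on a one-dimensional $\C$-vector space is scalar multiplication by a character; hence $f$ is $\Td$-semi-invariant.

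No serious obstacle is anticipated here: the proof is essentially a one-line consequence of sphericity together with the commutation of the two actions, both of which are already in place in the setup. The only point to verify carefully is the one-dimensionality of the $\overline{B}$-eigenspaces, which is a standard reformulation of sphericity and follows from the fact that $\overline{X}$ has an open $\overline{B}$-orbit.
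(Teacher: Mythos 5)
Your proof is correct and is essentially the paper's own argument: both rest on the commutation of the $\Td$-action with the $\overline{G}$-action (so that each $t \cdot f$ is again $\overline{B}$-semi-invariant of the same weight) together with sphericity of $\overline{X}$, which forces the $\overline{B}$-eigenspace of that weight to be one-dimensional and hence $t \cdot f = c(t) f$. You merely spell out two points the paper leaves implicit, namely the one-dimensionality of the eigenspaces and the fact that a quasitorus acting on a line acts through a character.
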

Note that we have $\Td^\circ \subseteq \overline{B}$, but we may have
$\Td \nsubseteq \overline{B}$, in which case $\Td/\Td^{\circ}$ is a
nontrivial finite group.
\begin{proof}
  Let $f \in \C(\overline{X})$ be a $\overline{B}$-semi-invariant
  rational function. For every $t \in \Td$ the rational function
  $t\cdot f$ is $\overline{B}$-semi-invariant of the same weight as
  $f$ because the actions of $\overline{G}$ and $\Td$ commute. Hence
  $t \cdot f = cf$ for some constant factor $c \in \C^*$.
\end{proof}

\begin{remark}
  \label{rem:sphrt}
  We denote by $\Sigma$ the uniquely determined linearly independent
  set of primitive elements in $\Mm$ such that $\Tm = \cone(\Sigma)$.
  The elements of $\Sigma$ are called the \emph{spherical roots} of
  $X$. It is sometimes more useful to consider the set $\Sigma^{\SC}$
  of \emph{spherically closed spherical roots}, where the spherical
  roots are possibly given a different length (for details, see, for
  instance, \cite[Section~2]{gh17}). In this paper, however, we are
  mostly concerned with the following subsets, which are the same for
  $\Sigma$ and $\Sigma^{\SC}$:
  \begin{align*}
    \Sigma^a &\coloneqq \Sigma^{\SC} \cap S = \Sigma \cap S\text{,}\\
    \Sigma^{2a} &\coloneqq \Sigma^{\SC} \cap 2S = \Sigma \cap 2S\text{.}
  \end{align*}
\end{remark}

\begin{prop}
  \label{prop:vc}
  We have $\pi_*^{-1}(\Vm) = \overline{\Vm}$.
\end{prop}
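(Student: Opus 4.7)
The plan is to establish both inclusions separately.

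For the easy direction $\overline{\Vm} \subseteq \pi_*^{-1}(\Vm)$, I would take $\overline{v} = \iota(\overline{\nu})$ for a $\overline{G}$-invariant $\Q$-valuation $\overline{\nu}$ on $\C(\overline{X})^*$, and consider the pulled-back functional $\nu(f) \coloneqq \overline{\nu}(\pi^* f)$ on $\C(X)^*$. This is again a $\Q$-valuation, and it is $G$-invariant because the $\overline{G}$-action on $\C(X)$ factors through the surjection $\varepsilon\colon \overline{G} \to G$, so $G$- and $\overline{G}$-invariance coincide on $\C(X)$. The previous proposition gives $\pi^*(f_\chi) = c \prod_{D \in \adiv} 1_D^{\langle \rho(D), \chi\rangle}$ up to some $c \in \C^*$, and applying $\overline{\nu}$ yields $\langle \iota(\nu), \chi\rangle = \sum_D \langle\rho(D),\chi\rangle \overline{\nu}(1_D) = \langle \pi_*(\overline{v}), \chi\rangle$ for every $\chi \in \Mm$. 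Hence $\pi_*(\overline{v}) = \iota(\nu) \in \Vm$.

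For the reverse inclusion $\pi_*^{-1}(\Vm) \subseteq \overline{\Vm}$, I would start from $\overline{v} \in \overline{\Nm}_\Q$ with $v \coloneqq \pi_*(\overline{v}) \in \Vm$ and construct a $\overline{G}$-invariant $\Q$-valuation $\overline{\nu}$ on $\C(\overline{X})$ realising $\overline{v}$. Fix a $G$-invariant $\Q$-valuation $\nu$ on $\C(X)$ with $\iota(\nu) = v$. Using that the $\Td$-action commutes with the $\overline{G}$-action and, via Lemma~\ref{le:fsi}, that $\overline{B}$-semi-invariants are automatically $\Td$-homogeneous, one decomposes $\C(\overline{X})$ by $\Td$-weight and prescribes $\overline{\nu}$ on a $\Td$-homogeneous $f$ of weight $\chi \in \Xf(\Td) \cong \Cl(X)$ so that its value combines $\nu$ applied to a chosen $\pi^*$-shadow with a correction term coming from $\langle \overline{v}, \chi\rangle$, in the unique way compatible with $\iota(\overline{\nu}) = \overline{v}$. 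The $\overline{G}$-invariance of the resulting assignment is then automatic from the $G$-invariance of $\nu$ and commutativity of $\Td$ with $\overline{G}$.

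The main obstacle will be verifying that this prescription defines an honest valuation (notably the triangle inequality $\overline{\nu}(f+g) \ge \min(\overline{\nu}(f), \overline{\nu}(g))$, which does not follow formally from a weight-based definition on semi-invariants). A more conceptual alternative route is to invoke Knop's description of the valuation cone: $\overline{\Vm} = \{\overline{v} : \langle \overline{v}, \overline{\sigma}\rangle \le 0 \text{ for all } \overline{\sigma} \in \overline{\Sigma}\}$, and $\Vm$ is cut out analogously by the spherical roots $\Sigma$. If one can separately establish the identity $\cone(\overline{\Sigma}) = \pi^*\cone(\Sigma)$ in $\overline{\Mm}_\Q$, i.e.\ that the spherical roots of the Cox ring are the $\pi^*$-images of those of $X$, then $\overline{\Vm} = \pi_*^{-1}(\Vm)$ follows immediately by duality of cones.
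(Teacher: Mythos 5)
Your first inclusion $\overline{\Vm} \subseteq \pi_*^{-1}(\Vm)$ is correct: restricting a $\overline{G}$-invariant valuation along $\pi^*\colon \C(X) \hookrightarrow \C(\overline{X})$ and checking $\iota(\overline{\nu}\circ\pi^*) = \pi_*(\iota(\overline{\nu}))$ is exactly the easy half of what the paper obtains by citing \cite[Corollary~1.5]{kno91}. The genuine gap is in the reverse inclusion, and neither of your two routes closes it. The first route fails exactly where you admit it does: given $\overline{v}$ with $\pi_*(\overline{v}) \in \Vm$, a weight-wise prescription on $\Td$-homogeneous (equivalently, by Lemma~\ref{le:fsi}, on $\overline{B}$-semi-invariant) functions does not extend formally to an honest $\overline{G}$-invariant valuation of $\C(\overline{X})$; the statement that every lattice vector satisfying the right linear inequalities is realized by an invariant valuation is precisely the hard content of Knop's theory of the valuation cone and cannot be reproved in passing. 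Your second route is circular in this setting: by the very duality you invoke, the identity $\cone(\overline{\Sigma}) = \pi^*(\cone(\Sigma))$ is equivalent to the proposition itself --- indeed, in the paper it appears as Corollary~\ref{cor:sr}, \emph{deduced from} Proposition~\ref{prop:vc} --- so \enquote{if one can separately establish the identity} amounts to assuming what is to be shown.

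For comparison, here is how the paper closes this direction. It cites \cite[Corollary~1.5]{kno91} for the full equality $\pi_*(\overline{\Vm}) = \Vm$, the essential point being \emph{surjectivity}, and then reduces the claim to showing $\pi_*^{-1}(0) \subseteq \overline{\Vm}$: if $\pi_*(u) \in \Vm$, pick $w \in \overline{\Vm}$ with $\pi_*(w) = \pi_*(u)$, write $u = w + (u-w)$ with $u - w \in \ker \pi_*$, and use convexity of the cone $\overline{\Vm}$. The kernel containment is where the specific geometry of the Cox ring enters: for $\gamma \in \overline{\Sigma}$, the function $f_\gamma$ is $\Td^\circ$-invariant because $\gamma$ is a nonnegative combination of simple roots, hence trivial on the central torus $\Td^\circ$; by Lemma~\ref{le:fsi} some power $f_\gamma^k$ is then $\Td$-invariant, so $f_\gamma^k \in \C(X)$ and $k\gamma \in \pi^*(\Mm)$. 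Consequently every $u \in \ker\pi_*$ pairs to zero with all of $\overline{\Sigma}$ and lies in $\overline{\Vm} = -\cone(\overline{\Sigma})^\vee$. This step --- every spherical root of $\overline{X}$ has a positive multiple in $\pi^*(\Mm)$ --- is the concrete input missing from your sketch; with it, your duality argument would also go through, but it must be proved, not assumed.
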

\begin{proof}
  According to \cite[Corollary~1.5]{kno91}, we have
  $\pi_*(\overline{\Vm}) = \Vm$. It remains to verify
  $\pi_*^{-1}(0) \subseteq \overline{\Vm}$. Let $u \in \pi_*^{-1}(0)$
  and $\gamma \in \overline{\Sigma}$. Let
  $f_\gamma \in \C(\overline{X})$ be a $\overline{B}$-semi-invariant
  rational function of weight $\gamma \in \overline{\Mm}$. As $\gamma$
  is a linear combination of simple roots with nonnegative
  coefficients (see, for instance, \cite[Table~30.2]{tim11}), the
  rational function $f_\gamma$ is $\Td^\circ$-invariant. It follows
  from Lemma~\ref{le:fsi} that some power $\smash{f_\gamma^k}$ is
  $\Td$-invariant, hence $\smash{f_\gamma^k} \in \C(X)$, \ie we have
  $k\gamma = \pi^*(v)$ for some $v \in \Mm$. It follows that we have
  $\langle u, k\gamma \rangle = \langle u, \pi^*(v) \rangle = \langle
  \pi_*(u), v \rangle = \langle 0, v \rangle = 0$,
  which implies $\langle u, \gamma \rangle = 0$. From the equality
  $\overline{\Vm} = -\Tm^\vee = -\cone(\overline{\Sigma})^\vee$, we
  now obtain $\pi_*^{-1}(0) \subseteq \overline{\Vm}$.
\end{proof}

\begin{cor}
  \label{cor:sr}
  We have $\pi^*(\Tm) = \overline{\Tm}$.
\end{cor}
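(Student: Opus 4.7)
The plan is to derive Corollary~\ref{cor:sr} from Proposition~\ref{prop:vc} by a purely dual-cone argument. Recall the definitions $-\Tm = \Vm^\vee \subseteq \Mm_\Q$ and $-\overline{\Tm} = \overline{\Vm}^\vee \subseteq \overline{\Mm}_\Q$. Hence the statement $\pi^*(\Tm) = \overline{\Tm}$ is equivalent to the statement $\pi^*(\Vm^\vee) = \overline{\Vm}^\vee$, and this is what I would prove.

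First I would recall the general convex-geometric fact: if $\varphi \colon V \twoheadrightarrow W$ is a surjective linear map of finite-dimensional $\Q$-vector spaces with transpose $\varphi^* \colon W^* \to V^*$, then for any convex cone $C \subseteq W$ one has
\begin{align*}
  (\varphi^{-1}(C))^\vee = \varphi^*(C^\vee)\text{.}
\end{align*}
This is routine: an element $\bar{m} \in V^*$ lies in $(\varphi^{-1}(C))^\vee$ exactly when it is nonnegative on $\varphi^{-1}(0) = \ker \varphi$ (hence vanishes there and therefore descends to some $m \in W^*$) and when the induced functional $m$ is nonnegative on the surjective image $\varphi(\varphi^{-1}(C)) = C$, i.e.~$m \in C^\vee$.

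In our situation, the injectivity of the pullback $\pi^* \colon \Mm \to \overline{\Mm}$ recalled before Proposition~\ref{prop:mbar} implies that $\pi_* \colon \overline{\Nm}_\Q \to \Nm_\Q$ is surjective, so the general fact applies with $\varphi = \pi_*$ and $C = \Vm$. Combining it with the equality $\overline{\Vm} = \pi_*^{-1}(\Vm)$ supplied by Proposition~\ref{prop:vc} yields
\begin{align*}
  \overline{\Vm}^\vee = (\pi_*^{-1}(\Vm))^\vee = \pi^*(\Vm^\vee)\text{,}
\end{align*}
and negating both sides gives $\overline{\Tm} = \pi^*(\Tm)$.

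There is really no hard step here; the only thing to watch is the surjectivity hypothesis for the dual-cone identity, which is guaranteed by Proposition~\ref{prop:mbar} (via the injectivity of $\pi^*$). Once Proposition~\ref{prop:vc} is in hand, the corollary is a one-line dualisation.
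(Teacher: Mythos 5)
Your proof is correct and is essentially the argument the paper leaves implicit: Corollary~\ref{cor:sr} is stated without proof as an immediate consequence of Proposition~\ref{prop:vc}, and your dualization of the equality $\pi_*^{-1}(\Vm) = \overline{\Vm}$ via the identity $(\varphi^{-1}(C))^\vee = \varphi^*(C^\vee)$ for surjective $\varphi$ (with surjectivity of $\pi_*$ coming from injectivity of $\pi^*$) is precisely the intended step. The only quibble is attributional: the injectivity of $\pi^*$ is asserted in the text preceding Proposition~\ref{prop:mbar} rather than being part of that proposition, but this has no bearing on the validity of the argument.
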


\begin{remark}
  \label{rem:2aa}
  Let $2\alpha \in \Sigma^{2a}$. According to Corollary~\ref{cor:sr},
  the extremal ray spanned by $2\alpha$ in $\Tm$ is also an extremal
  ray of $\overline{\Tm}$. We then have either
  $2\alpha \in \overline{\Sigma}{}^{2a}$ (which happens if
  $\alpha \notin \overline{\Mm}$) or
  $\alpha \in \overline{\Sigma}{}^{a}$ (which happens if
  $\alpha \in \overline{\Mm}$).
\end{remark}

\begin{definition}
  We define 
  \begin{align*}
    \srad \coloneqq
    \{\alpha \in S :
    2\alpha \in \Sigma^{2a} \text{ and } \alpha \in \overline{\Mm}\}
    = \overline{\Sigma}{}^{a} \setminus \Sigma^{a}\text{,}
  \end{align*}
  which is the set of simple roots appearing in the second case of
  Remark~\ref{rem:2aa}.
\end{definition}

\begin{remark}
  The following general combinatorial properties of the colors explain
  the importance of the set $\srad$. For details, we refer, for
  instance, to \cite[Section~1.1]{bl11}.

  If $2\alpha \in \Sigma^{2a}$, then there is exactly one color $D$
  with $\alpha \in \varsigma(D)$, and such colors are said to be of
  type $2a$. If $\alpha \in \Sigma^{a}$, then there are exactly two
  colors $D$ with $\alpha \in \varsigma(D)$, and such colors are said
  to be of type $a$.

  If $2\alpha \in \Sigma^{2a}$, then $\alpha \in \varsigma(D)$ implies
  $\varsigma(D) = \{\alpha\}$ and $\{D'\in\bdiv : \alpha \in
  \varsigma(D')\} = \{D\}$. In particular, we obtain a bijection
  between the set $\srad$ and the set $\Dm^\srad \coloneqq \{D \in
  \bdiv : \varsigma(D) \subseteq \srad\}$. It is natural to expect
  that every color in $X$ belonging to $\Dm^\srad$ is replaced by two
  colors in $\overline{X}$. This will be confirmed in
  Proposition~\ref{prop:adiv}.
\end{remark}

\begin{lemma}
  \label{le:prim}
  For every $D' \in \overline{\adiv}$ the element
  $\overline{\rho}(D') \in \overline{\Nm}$ is primitive.
\end{lemma}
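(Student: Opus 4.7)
The plan is to expand $\overline{\rho}(D')$ in the $\Z$-basis $(e_D^*)_{D \in \adiv}$ of $\overline{\Nm}$ dual to $(e_D)_{D \in \adiv}$ (from Proposition~\ref{prop:mbar}) and to exhibit a coordinate equal to $1$, which yields primitivity. Writing $\overline{\rho}(D') = \sum_{D \in \adiv} n_D \, e_D^*$, the coordinates are
\begin{align*}
n_D = \langle \overline{\rho}(D'), e_D \rangle = \nu_{D'}(1_D) = \mathrm{mult}_{D'}(\pi^*(D)) \in \Z_{\ge 0},
\end{align*}
so the task reduces to finding some $D \in \adiv$ with $D'$ appearing in $\pi^*(D)$ with multiplicity exactly $1$.

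First, I would show that every $D' \in \overline{\adiv}$ is a component of some $\pi^*(D)$. Using sphericality of $\overline{X}$, the complement $\overline{X} \setminus \bigcup_{D' \in \overline{\adiv}} D'$ is the open $\overline{B}$-orbit, call it $\overline{X}_0$. A dimension count, exploiting that $\pi$ is a good quotient by $\Td$ and that $\Td^{\circ} \subseteq \overline{B}$, shows that every point of $\pi^{-1}(X_0)$ has $\overline{B}$-orbit of dimension $\dim \overline{X}$; consequently $\overline{X}_0 = \pi^{-1}(X_0)$ up to codimension $\ge 2$, and $\bigcup_{D' \in \overline{\adiv}} D' = \bigcup_{D \in \adiv} \pi^*(D)$ as Weil divisors on $\overline{X}$.

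Next, I would show that each $\pi^*(D) = \Div 1_D$ on $\overline{X}$ is reduced, so that $\mathrm{mult}_{D'}(\pi^*(D)) \in \{0, 1\}$; this is a standard feature of the Cox ring construction, as the canonical section $1_D$ generates a radical ideal in $\Rm(X)$ (the quotient being essentially the Cox ring of the open subvariety $X \setminus D$). Together with the previous step, this yields $n_D = 1$ for any $D$ with $D'$ a component of $\pi^*(D)$.

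The main obstacle is the reducedness claim; though intuitively clear, justifying it rigorously requires a careful analysis of the Cox ring via the theory of characteristic spaces developed in \cite[Section~1.6]{adhl15}. An alternative, uniform approach would be to bypass this step and instead produce directly a $\overline{B}$-semi-invariant rational function of valuation exactly $1$ along $D'$ using Lie--Kolchin on a finite-dimensional $\overline{B}$-stable subspace of $\C[\overline{X}]$ generated by the $\overline{G}$-orbit of some element $u \in \mathfrak{p} \setminus \mathfrak{p}^{(2)}$, where $\mathfrak{p}$ is the prime ideal of $D'$; this has the advantage of avoiding the reducedness claim, at the cost of a delicate lifting argument.
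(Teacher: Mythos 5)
Your reduction is set up correctly: expanding $\overline{\rho}(D')$ in the basis $(e_D^*)_{D \in \adiv}$ and exhibiting one coordinate equal to $1$ does prove primitivity, and your first step (every $D' \in \overline{\adiv}$ appears in some $\Div 1_D$) can be made rigorous roughly as sketched, although the dimension count silently uses that the $\Td$-isotropy groups are finite over the open $B$-orbit of $X$ (a fact from the theory of characteristic spaces, \cite[Section~1.6]{adhl15}). The genuine gap is the step you yourself flag as the main obstacle: the multiplicity-one claim $\mult_{D'}(\Div 1_D) \le 1$. The justification you give for it is incorrect: $\Rm(X)/(1_D)$ is not \enquote{essentially the Cox ring of $X \setminus D$} --- removing the divisor $D$ corresponds to \emph{localizing} at $1_D$ (and then passing to a suitable degree-zero part), not to the quotient ring, and the $\Cl(X)$-primality of $1_D$ from \cite[Proposition~1.5.3.5]{adhl15} yields irreducibility of $\Supp \Div 1_D$, not reducedness of $\Div 1_D$. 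Moreover, in the paper the equality $\nu_{D'}(1_D) = 1$ is \emph{deduced from} Lemma~\ref{le:prim} together with Lemma~\ref{le:extr} in the proof of Proposition~\ref{prop:adiv}, so your order of deduction is reversed and, absent an independent argument, circular. (The claim is true and could be repaired: over the smooth locus of $X$ the characteristic space is a $\Td$-torsor, hence flat with reduced fibers, so the pullback of a prime divisor is generically reduced; but no such argument appears in your write-up.)

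Your fallback \enquote{alternative, uniform approach} is not merely delicate --- it is false. It invokes no special property of $\overline{X} = \Spec \Rm(X)$, so if it worked it would show that $\rho(D)$ is primitive for every $B$-invariant prime divisor of an \emph{arbitrary} affine spherical variety. This fails: for $X = SL_2/N(T)$ (an affine spherical homogeneous space), the unique color $D$ is of type $2a$, one has $\Mm = \Zd \cdot 2\alpha$ and $\langle \rho(D), 2\alpha \rangle = 2$, so $\rho(D)$ is twice a primitive element; equivalently, every $B$-semi-invariant rational function has even valuation along $D$, and no Lie--Kolchin argument can produce one with valuation $1$. Concretely, a $\overline{B}$-eigenvector inside $\lspan(\overline{G} \cdot u)$ does not inherit the order of vanishing of $u$ along $D'$: the valuations $\nu_{D'}(g \cdot u) = \nu_{g^{-1}D'}(u)$ vary with $g$, and valuations of linear combinations are only bounded below, so nothing pins $\nu_{D'}(f)$ to $1$. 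Any correct proof must exploit what is special about $\overline{X}$; the paper does so in two lines, by a route disjoint from yours: since every $\Td$-invariant Weil divisor on $\overline{X}$ is principal (\cite[Proposition~1.5.3.3]{adhl15}), the effective divisor $\delta \coloneqq \sum_{D'' \in \Td \cdot D'} D''$, which contains $D'$ with multiplicity $1$, equals $\Div f_\chi$ for some $\overline{B}$-semi-invariant $f_\chi \in \Rm(X)$ of weight $\chi \in \overline{\Mm}$, whence $\langle \overline{\rho}(D'), \chi \rangle = 1$ and $\overline{\rho}(D')$ is primitive.
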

\begin{proof}
  There exists an effective $\overline{B}$- and $\Td$-invariant Weil
  divisor $\delta$ on $\overline{X}$ such that $D'$ has multiplicity
  $1$ in $\delta$. Indeed, since $D'$ is $\overline{B}$-invariant and
  $\Td^\circ \subseteq \overline{B}$, we could choose $\delta
  \coloneqq \sum_{D''\in\Td\cdot D'} D''$ or, alternatively, $\delta
  \coloneqq \sum_{D''\in\overline{\adiv}} D''$. As every
  $\Td$-invariant Weil divisor on $\overline{X}$ is principal (see
  \cite[Proposition~1.5.3.3]{adhl15}), there exists a
  $\overline{B}$-semi-invariant regular function $f_\chi \in
  \Cd[\overline{X}] = \Rm(X)$ with $\Div f_\chi = \delta$ of some
  weight $\chi \in \overline{\Mm}$. As $D'$ has multiplicity $1$ in
  $\Div f_\chi$, we have $\langle \overline{\rho}(D'), \chi\rangle =
  1$.
\end{proof}

\begin{lemma}
  \label{le:extr}
  For every $D' \in \overline{\adiv}$ the element
  $\overline{\rho}(D') \in \overline{\Nm}$ lies in some extremal ray
  of
  $\cone(\overline{\Mm}{}^+)^\vee = \cone(e_{D}^* : D \in \adiv)
  \subseteq \overline{\Nm}_\Q$.
\end{lemma}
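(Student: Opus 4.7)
My plan is to leverage Proposition~\ref{prop:mbar}, which asserts that the $e_D$ for $D \in \adiv$ form a $\Zd$-basis of $\overline{\Mm}$ generating the monoid $\overline{\Mm}{}^+$. This makes the dual cone $\cone(\overline{\Mm}{}^+)^\vee \subseteq \overline{\Nm}_\Q$ simplicial with extremal rays $\Rd_{\ge 0}\, e_D^*$, so a nonzero vector in it lies on an extremal ray if and only if at most one of its coordinates in the dual basis $(e_D^*)_{D\in\adiv}$ is nonzero. Since Lemma~\ref{le:prim} guarantees $\overline{\rho}(D') \ne 0$, the task reduces to showing that among the nonnegative coordinates
\begin{align*}
\langle \overline{\rho}(D'), e_D \rangle = \nu_{D'}(1_D), \qquad D \in \adiv,
\end{align*}
at most one is strictly positive; nonnegativity itself is immediate because $1_D \in \C[\overline{X}]$ is a regular function.

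To rule out two positive coordinates, I will argue by contradiction. Suppose $\nu_{D'}(1_{D_1})$ and $\nu_{D'}(1_{D_2})$ are both positive for some distinct $D_1, D_2 \in \adiv$. The identity $\pi^*(D_i) = \Div 1_{D_i}$ from \cite[Proposition~1.6.2.1]{adhl15} (already invoked earlier in the section) then forces the nonempty open subset $D' \cap \widehat{X}$ of $D'$ to lie inside $\pi^{-1}(D_1) \cap \pi^{-1}(D_2) = \pi^{-1}(D_1 \cap D_2)$.

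A dimension count will yield the contradiction. Since $\Td$ acts with finite stabilizers on $\widehat{X}$ and $\pi\colon \widehat{X}\to X$ is the (geometric) good quotient, the fibers of $\pi$ are equidimensional of dimension $\dim\Td$, so
\begin{align*}
\dim \pi^{-1}(D_1 \cap D_2) \le \dim(D_1 \cap D_2) + \dim\Td \le (\dim X - 2) + \dim\Td = \dim\overline{X} - 2.
\end{align*}
On the other hand, $\overline{X}\setminus\widehat{X}$ has codimension at least $2$ in $\overline{X}$, so $D' \cap \widehat{X}$ is a nonempty open subset of the prime divisor $D'$ and has dimension $\dim\overline{X} - 1$. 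This contradicts the containment above, completing the argument.

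The only step I expect to require careful justification is the fiber-dimension claim for $\pi$; while this is a standard consequence of the characteristic quasitorus acting on $\widehat{X}$ with only finite stabilizers (cf.\ \cite[Chapter~1]{adhl15}), explicitly flagging the reference is what keeps the dimension count rigorous.
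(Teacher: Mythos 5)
The reduction you make at the start (show that at most one of the nonnegative coordinates $\langle\overline{\rho}(D'),e_D\rangle=\nu_{D'}(1_D)$ can be positive, using $\pi^*(D_i)=\Div 1_{D_i}$) is exactly the paper's reduction, but the step you use to rule out two positive coordinates has a genuine gap. Your claim that $\Td$ acts on $\widehat{X}$ with finite stabilizers, so that $\pi$ is a geometric quotient with fibers equidimensional of dimension $\dim\Td$, is false in the generality of this paper: $X$ is only assumed normal, not $\Q$-factorial, and by \cite[Corollary~1.6.2.7]{adhl15} (the very result the paper invokes in the proof of Proposition~\ref{prop:adiv}, where it deliberately passes to a smooth open subset $X_1$ precisely to make the quotient geometric) the quotient $\pi$ is geometric, equivalently has finite isotropy groups, if and only if $X$ is $\Q$-factorial; in general the isotropy group at the closed orbit over $x\in X$ is the quasitorus dual to the local class group $\Cl(X,x)$, which may be positive-dimensional. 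Concretely, let $X=\{xy=zw\}\subseteq\C^4$ be the affine cone over $\Pd^1\times\Pd^1$, a toric (hence spherical) variety with $\Gamma(X,\Om^*)=\C^*$. Here $\Rm(X)=\C[a,b,c,d]$, $\widehat{X}=\overline{X}=\C^4$ (as $X$ is affine), $\Td=\C^*$ acts with weights $(1,1,-1,-1)$, and $\pi(a,b,c,d)=(ac,bd,ad,bc)$. The fiber over the apex is $\{a=b=0\}\cup\{c=d=0\}$, of dimension $2>\dim\Td=1$, and the origin has stabilizer all of $\Td$. Taking $D_1=X\cap\{x=z=0\}$ and $D_2=X\cap\{y=w=0\}$, one has $D_1\cap D_2=\{0\}$, so your inequality $\dim\pi^{-1}(D_1\cap D_2)\le\dim(D_1\cap D_2)+\dim\Td$ reads $2\le 1$ and fails. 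So the step you flagged as merely needing a careful reference is not a standard fact awaiting citation; as stated, it is wrong.

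The paper avoids all fiber-dimension considerations: it uses that $1_{D_1}$ and $1_{D_2}$ are $\Cl(X)$-prime \cite[Proposition~1.5.3.5]{adhl15}, so that the support of $\Div 1_{D_i}$ is a single $\Td$-orbit of prime divisors; since $D'$ lies in both supports, $\pi^{-1}(D_1)=\Supp\Div 1_{D_1}=\Td\cdot D'=\Supp\Div 1_{D_2}=\pi^{-1}(D_2)$, whence $D_1=D_2$. If you wanted to rescue your geometric route, what you actually need is the statement that the $\pi$-preimage of a closed subset of codimension at least $2$ in $X$ has codimension at least $2$ in $\overline{X}$ (equivalently, that no prime divisor of $\overline{X}$ maps into a codimension-$2$ subset); but proving this again rests on the same structural results about invariant divisors on $\overline{X}$ being pullbacks of divisors on $X$, not on an orbit-dimension count, so it offers no shortcut over the paper's argument.
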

\begin{proof}
  For every $D \in \adiv$ we have
  $\langle \overline{\rho}(D'), e_D \rangle \ge 0$, and, by
  \cite[Proposition~1.6.2.1]{adhl15}, we have $\pi^*(D) = \Div 1_D$.
  Let $D_1, D_2 \in \adiv$ with
  $\langle \overline{\rho}(D'), e_{D_{1}} \rangle > 0$ and
  $\langle \overline{\rho}(D'), e_{D_{2}} \rangle > 0$. As $1_{D_1}$
  and $1_{D_2}$ are $\Cl(X)$-prime
  (see~\cite[Proposition~1.5.3.5]{adhl15}), we have
  \begin{equation*}
    \pi^{-1}(D_1) = \Supp \Div 1_{D_1} = \Td \cdot D'
    = \Supp \Div 1_{D_2} = \pi^{-1}(D_2)\text{,}
  \end{equation*}
  \ie we have $D_1 = D_2$. It follows that $\overline{\rho}(D')$ lies
  in some extremal ray of $\cone(\overline{\Mm}{}^+)^\vee$.
\end{proof}

\begin{prop}
  \label{prop:adiv}
  The $\overline{B}$-invariant prime divisors in $\overline{X}$ are
  described as follows:
  \begin{enumerate}
  \item For every $D \in \bdiv^\srad$ there exist two distinct colors
    $D', D'' \in \overline{\bdiv} \subseteq \overline{\adiv}$ with
    \begin{align*}
      \overline{\rho}(D') = \overline{\rho}(D'') = e^*_D
      \text{ and } \overline{\varsigma}(D') = \overline{\varsigma}(D'') =
      \varsigma(D)\text{.}\end{align*}
    Moreover, we have $\pi^*(D) = D' + D''$.
  \item For every $D \in \adiv \setminus \bdiv^\srad$ there exists a
    divisor $D' \in \overline{\adiv}$ with
    \begin{align*}
      \overline{\rho}(D') = e^*_D
      \text{ and }
      \overline{\varsigma}(D') = \varsigma(D)\text{.}
    \end{align*}
    Moreover, we have $\pi^*(D) = D'$.
  \item We have
    \begin{align*}
      \sum_{D'\in \overline{\adiv}} D' = \sum_{D \in \adiv} \pi^*(D)\text{.}
    \end{align*}
    This means that all divisors $D'$ and $D''$ occurring in (1) and
    (2) above are pairwise distinct and that every divisor in
    $\overline{\adiv}$ appears as some $D'$ or $D''$.
  \end{enumerate}
\end{prop}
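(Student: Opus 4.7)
The plan is to construct a map $\phi\colon \overline{\adiv}\to\adiv$ from Lemmas~\ref{le:prim} and~\ref{le:extr}, determine its fibres, and read off parts~(1)--(3).

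First, by those two lemmas every $D' \in \overline{\adiv}$ satisfies $\overline{\rho}(D') = e_{\phi(D')}^*$ for a uniquely determined $\phi(D') \in \adiv$, since the primitive generator of the extremal ray of $\cone(e_D^* : D\in\adiv)$ containing $\overline{\rho}(D')$ must be some $e_D^*$. Because $\pi^*(D) = \Div 1_D$ by \cite[Proposition~1.6.2.1]{adhl15} and the multiplicity of $D'$ in $\pi^*(D)$ equals $\langle\overline{\rho}(D'), e_D\rangle = \delta_{\phi(D'),D}$, one obtains
\begin{align*}
  \pi^*(D) \;=\; \sum_{D' \in \phi^{-1}(D)} D',
\end{align*}
with each summand of multiplicity one; summing over $D\in\adiv$ yields the equality in~(3), and set-theoretically $\phi(D') = \pi(D')$. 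I would then verify $\overline{\varsigma}(D') = \varsigma(\phi(D'))$ by noting that for each $\alpha \in S$, the set $P_\alpha\cdot D' \subseteq \overline{X}$ is either equal to $D'$ or is swept out by $P_\alpha/B \cong \Pd^1$, giving dimension $\dim D' + 1$ and hence density in $\overline{X}$; the same dichotomy for $P_\alpha\cdot\pi(D') \subseteq X$ is matched by the $\overline{G}$-equivariance of $\pi$.

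To determine $|\phi^{-1}(D)|$ I split by $\varsigma(D)$. If $\varsigma(D) = \emptyset$, then every $D' \in \phi^{-1}(D)$ is $\overline{G}$-invariant by the previous step, and injectivity of $\iota\colon \overline{\Vm}\hookrightarrow \overline{\Nm}_\Qd$ forces $|\phi^{-1}(D)| = 1$. For colors I use Corollary~\ref{cor:sr}, which implies $\overline{\Sigma}^a = \Sigma^a \sqcup \srad$, together with the classification of colors recalled in the remark preceding Lemma~\ref{le:prim}: the only $\varsigma$-value for which $\adiv$ and $\overline{\adiv}$ have a different number of colors is $\{\alpha\}$ with $\alpha \in \srad$, where there is a single type $2a$ color in $\adiv$ but two type $a$ colors in $\overline{\adiv}$. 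Surjectivity of $\phi$ then yields $|\phi^{-1}(D)| = 2$ in that case and $|\phi^{-1}(D)| = 1$ in all others, from which parts~(1) and~(2) follow immediately.

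I expect the delicate point to be the matching for colors of type $b$, where combinatorial invariants beyond $\varsigma$ and the spherical roots enter: one has to invoke the precise description of colors (for instance \cite[Section~1.1]{bl11}) and observe that the set of ``type $b$ simple roots'' $S\setminus(\Sigma^a \cup \Sigma^{2a}/2) = S\setminus(\overline{\Sigma}^a \cup \overline{\Sigma}^{2a}/2)$ is the same for $X$ and $\overline{X}$, ensuring that those colors are matched bijectively by $\phi$.
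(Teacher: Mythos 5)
Your route is the same as the paper's: Lemmas~\ref{le:prim} and \ref{le:extr} produce the map $\phi$ with $\overline{\rho}(D') = e^*_{\phi(D')}$, the multiplicity computation for $\pi^*(D) = \Div 1_D$ gives $\pi^*(D) = \sum_{D'\in\phi^{-1}(D)} D'$ and hence part (3), the $\overline{G}$-invariant fibres are bounded using injectivity of $\overline{\Vm} \hookrightarrow \overline{\Nm}_\Qd$, and the color fibres are counted against the classification of colors (your explicit treatment of the type $b$ roots spells out what the paper simply cites from \cite[Section~1.1]{bl11}). All of this, however, hinges on the equality $\overline{\varsigma}(D') = \varsigma(\phi(D'))$, and there your argument has a genuine gap in one of the two inclusions.

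Your dichotomy argument does prove $\overline{\varsigma}(D') \subseteq \varsigma(D)$ for $D = \phi(D')$: if $P_\alpha \cdot D = D$, then $\bigcup_{D''\in\phi^{-1}(D)} D''$, being the closure of the $\overline{P}_\alpha$-stable set $\pi^{-1}(D)$, is $\overline{P}_\alpha$-stable, so $\overline{P}_\alpha\cdot D'$ lies in a proper closed subset, cannot be dense, and hence equals $D'$. The problem is the reverse inclusion $\varsigma(D) \subseteq \overline{\varsigma}(D')$: to transport \enquote{$\overline{P}_\alpha$ fixes $D'$} down to \enquote{$P_\alpha$ fixes $D$} you need $\pi(D'\cap\widehat{X})$ to be dense in $D$, which is exactly your parenthetical claim \enquote{set-theoretically $\phi(D') = \pi(D')$}. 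That claim does not follow from the multiplicity computation: the latter only gives $D'\cap\widehat{X} \subseteq \pi^{-1}(D)$, \ie $\pi(D'\cap\widehat{X}) \subseteq D$. When $\phi^{-1}(D)$ has several elements, nothing in your argument prevents one of them from dominating $D$ while another maps onto a proper closed $P_\alpha$-stable subset of $D$; for such a component the implication fails, and with it your fibre counts (you could then not exclude, for instance, that for $D \in \bdiv^\srad$ the fibre contains an extra $\overline{G}$-invariant divisor besides the two colors, or that one of the two colors of $X$ moved by some $\alpha \in \Sigma^a$ has a fibre containing no color at all). The missing ingredient --- and the point where the paper's proof does real work beyond your outline --- is that $1_D$ is $\Cl(X)$-prime (\cite[Proposition~1.5.3.5]{adhl15}), so that $\Td$ permutes the irreducible components of $\Supp \Div 1_D$ transitively; then $\Td\cdot D' = \Supp\Div 1_D = \pi^{-1}(D)$, and $\Td$-invariance together with surjectivity of $\pi$ gives $\pi(D'\cap\widehat{X}) = \pi(\pi^{-1}(D)) = D$. (The paper also first restricts to the open subset $X_1 \subseteq X$ obtained by removing the $\overline{G}$-orbits of codimension at least $2$, over which the quotient is geometric, so that these set-theoretic identities are available.) Once this ingredient is added, your proof becomes essentially the paper's.
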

\begin{proof}
  It follows from Lemmas~\ref{le:prim} and \ref{le:extr} that for
  every $D' \in \smash{\overline{\adiv}}$ there exists $D \in \adiv$
  with $\overline{\rho}(D') = e^*_{D}$.

  Let $D \in \adiv$. According to \cite[Proposition~1.6.2.1]{adhl15},
  we have $\pi^*(D) = \Div 1_D$. For every $D' \in \overline{\adiv}$
  we have $\nu_{D'}(1_D) = \langle \overline{\rho}(D'), e_D \rangle$.
  In view of Lemmas~\ref{le:prim} and \ref{le:extr}, this means
  $\nu_{D'}(1_D) = 1$ if $\overline{\rho}(D') = e^*_D$ and
  $\nu_{D'}(1_D) = 0$ otherwise. In other words, we have
  \begin{align*}
    \pi^*(D) = \sum_{D' \in \overline{\adiv} : \overline{\rho}(D') = e^*_D } D'\text{.}
  \end{align*}
  
  Let $D' \in \smash{\overline{\adiv}}$ with
  $\overline{\rho}(D') = e^*_D$. We want to show
  $\overline{\varsigma}(D') = \varsigma(D)$. By removing the
  $\overline{G}$-orbits of codimension at least $2$ from $X$, we
  obtain an open $\overline{G}$-subvariety $X_1 \subseteq X$ and a
  geometric quotient $\pi_1 \colon \pi^{-1}({X}_1) \to X_1$ since
  $X_1$ is smooth, in particular $\Qd$-factorial (see
  \cite[Corollary~1.6.2.7]{adhl15}). We identify the
  $\overline{B}$-invariant prime divisors in $X_1$ (resp.~in
  $\pi^{-1}({X}_1)$) with those in $X$ (resp.~in $\overline{X}$). Let
  $\alpha \notin \varsigma(D)$. Then $\overline{P}_\alpha$ stabilizes
  $\pi^{-1}_1(D)$ and hence, since $\overline{P}_\alpha$ is connected,
  the connected component $D' \subseteq \pi^{-1}_1(D)$, \ie
  $\alpha \notin \overline{\varsigma}(D')$. Now let
  $\alpha \notin \overline{\varsigma}(D')$. As $1_D$ is $\Cl(X)$-prime
  (see~\cite[Proposition~1.5.3.5]{adhl15}), we have
  $\Td \cdot D' = \Supp \Div 1_D = \pi_1^{-1}(D)$, hence
  $\pi_1(D') = D$, and therefore $\alpha \notin \varsigma(D)$.
  
  It remains to show that for a fixed $D \in \adiv$ the number of
  $D' \in \overline{\adiv}$ with $\overline{\rho}(D') = e_D^*$ is
  exactly $2$ in the case $D \in \srad$ and exactly $1$ otherwise.
  First note that we have
  \begin{align*}
    \cone(\overline{\rho}(D') : D' \in \overline{\adiv})
    = \cone(\overline{\Mm}{}^+)^\vee = \cone(e_{D}^* : D \in \adiv)\text{,}
  \end{align*}
  so that the number of $D' \in \overline{\adiv}$ with
  $\overline{\rho}(D') = e_D^*$ is always at least $1$. For
  $\overline{G}$-invariant prime divisors $D'$, \ie in the case
  $\overline{\varsigma}(D') = \varsigma(D) = \emptyset$, it follows
  from the Luna-Vust theory of spherical embeddings that the number of
  $D'$ with $\overline{\rho}(D') = e_D^*$ is at most $1$. On the other
  hand, if the prime divisor $D$ is a color, then the claimed number
  of $D'$ with $\overline{\rho}(D') = e_D^*$ follows from the general
  combinatorial properties of the colors (see, for instance,
  \cite[Section~1.1]{bl11}).
\end{proof}

\begin{theorem}
  \label{th:cl}
  We have $\Cl(\overline{X}) \cong \Z^{\srad}$. In particular, the Cox
  ring $\Rm(X)$ is factorial if and only if $\srad = \emptyset$.
\end{theorem}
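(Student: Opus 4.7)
The plan is to use the standard presentation of the divisor class group of an affine spherical variety. Since $\overline{X}$ is normal affine with an open $\overline{B}$-orbit and $\Rm(X)^{*} = \Cd^{*}$ (Proposition~\ref{prop:constinv}), every Weil divisor class is represented by a $\overline{B}$-invariant Weil divisor, and a $\overline{B}$-invariant Weil divisor is principal if and only if it is the divisor of a $\overline{B}$-semi-invariant regular function. This yields the exact sequence
\begin{align*}
  0 \to \overline{\Mm} \xrightarrow{\;\varphi\;} \Zd^{\overline{\adiv}} \to \Cl(\overline{X}) \to 0,
\end{align*}
where $\varphi(\chi) \coloneqq \sum_{D' \in \overline{\adiv}} \langle \overline{\rho}(D'), \chi\rangle D'$. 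Injectivity of $\varphi$ is immediate: a $\overline{B}$-semi-invariant $f \in \Rm(X)$ with $\Div f = 0$ lies in $\Rm(X)^{*} = \Cd^{*}$ and hence has weight $0$.

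Next I would describe $\varphi$ explicitly on the $\Zd$-basis $(e_{D})_{D \in \adiv}$ of $\overline{\Mm}$ provided by Proposition~\ref{prop:mbar}. By Proposition~\ref{prop:adiv}, for each $D \in \adiv \setminus \bdiv^{\srad}$ there is a unique $D' \in \overline{\adiv}$ with $\overline{\rho}(D') = e_{D}^{*}$, while for each $D \in \bdiv^{\srad}$ there are exactly two such divisors $D', D''$. All other elements of $\overline{\adiv}$ are of the form $\overline{\rho}^{-1}(e_{\widetilde{D}}^{*})$ for some $\widetilde{D} \ne D$, and hence pair trivially with $e_{D}$. Consequently
\begin{align*}
  \varphi(e_{D}) = \begin{cases} D' & \text{if } D \in \adiv \setminus \bdiv^{\srad},\\ D' + D'' & \text{if } D \in \bdiv^{\srad}. \end{cases}
\end{align*}

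To compute the cokernel, I would decompose
\begin{align*}
  \Zd^{\overline{\adiv}} = \bigoplus_{D \in \adiv \setminus \bdiv^{\srad}} \Zd D' \;\oplus \bigoplus_{D \in \bdiv^{\srad}} (\Zd D' \oplus \Zd D''),
\end{align*}
so that $\varphi(\overline{\Mm})$ kills each summand in the first sum, and in each summand $\Zd D' \oplus \Zd D''$ of the second sum it identifies $D''$ with $-D'$, leaving a quotient isomorphic to $\Zd$. Summing over the bijection $\srad \leftrightarrow \bdiv^{\srad}$ recalled after Remark~\ref{rem:2aa}, this gives $\Cl(\overline{X}) \cong \Zd^{\srad}$. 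The factoriality statement is immediate: $\Rm(X) = \Cd[\overline{X}]$ is factorial iff $\Cl(\overline{X}) = 0$ iff $\srad = \emptyset$.

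The only delicate step is invoking the standard exact sequence above; once that is in place, the argument is bookkeeping using Propositions~\ref{prop:mbar} and \ref{prop:adiv}. I do not foresee a substantial obstacle.
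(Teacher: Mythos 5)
Your proof is correct and follows essentially the same route as the paper: the paper simply cites Brion's description of the class group of a spherical variety (the statement that the complement of a subset of $\overline{\adiv}$ mapping bijectively to a $\Zd$-basis of $\overline{\Nm}$ freely generates $\Cl(\overline{X})$), which is exactly the exact sequence $0 \to \overline{\Mm} \to \Zd^{\overline{\adiv}} \to \Cl(\overline{X}) \to 0$ you derive, combined with the same input from Propositions~\ref{prop:mbar} and \ref{prop:adiv}. Your explicit cokernel computation is just an unpacked version of that citation (only note that in the exactness argument the semi-invariant $f$ should be a rational, not regular, function).
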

\begin{proof}
  This follows directly from the description of the divisor class
  group of a spherical variety given in
  \cite[Proposition~4.1.1]{bri07}: If
  $\adiv_1 \subseteq \overline{\adiv}$ is a subset with
  $\overline{\rho}|_{\adiv_1}$ injective and
  $\overline{\rho}(\adiv_1)$ a $\Zd$-basis of $\overline{\Nm}$, then
  the set $\overline{\adiv} \setminus \adiv_1$ freely generates
  $\Cl(\overline{X})$.
\end{proof}

\begin{remark}
  If the divisor class group $\Cl(X)$ is free, then the finitely
  generated Cox ring of an arbitrary normal variety $X$ is factorial
  (see~\cite[Proposition~1.5.2.5]{adhl15}). In particular, for
  spherical varieties we obtain that $\srad \ne \emptyset$ implies
  that $\Cl(X)$ is not free.
\end{remark}

\section{The spherical skeleton determines the Cox ring}
\label{sec:ssk}

The definition of the spherical skeleton $\Rs_X$ in
Definition~\ref{def:ssk} is slightly different, but contains exactly
the same information as the definition given in
\cite[Section~5]{gh17}.

\begin{remark}
  \label{rem:ex}
  It can be seen in the proof of \cite[Theorem~6.11]{gh17} that every
  spherical skeleton $\Rs$ as defined in \cite[Section~5]{gh17} can be
  obtained as $\Rs_X$ for some spherical variety $X$ with
  $\OXs = \C^*$.
\end{remark}

\begin{remark}
  It is possible to recover the set of spherically closed spherical
  roots $\Sigma^{\SC}$ (as well as the subsets $\Sigma^a$ and
  $\Sigma^{2a}$, see Remark~\ref{rem:sphrt}, but not the ordinary set
  of spherical roots $\Sigma$) from the spherical skeleton $\Rs_X$.
  Recall that $\Sigma^{\SC}$ is a linearly independent set of
  generators for the cone $\Tm$ and that we have
  $\langle \rho(\adiv), \Sigma^{\SC} \rangle \subseteq \Zd$.
\end{remark}

\begin{definition}
  \label{def:scong}
  Two spherical skeletons $\Rs_1 \coloneqq (R_1, S_1, \Tm_1, \adiv_1)$
  and $\Rs_2 \coloneqq (R_2, S_2, \Tm_2, \adiv_2)$ are said to be
  isomorphic, written $\Rs_1 \cong \Rs_2$, if there exists an
  isomorphism of root systems $\phi_R\colon R_1 \to R_2$ with
  $\phi_R(S_1) = S_2$ and $\phi_R(\Tm_1) = \Tm_2$ as well as a
  bijection $\phi_\adiv\colon \adiv_1 \to \adiv_2$ such that for every
  $D \in \adiv_1$ we have
  $\cfr_1(D) = \cfr_2(\phi_\adiv(D)) \circ \phi_R|_{\Lambda_\Qd}$ and
  $\phi_R(\varsigma_1(D)) = \varsigma_2(\phi_\adiv(D))$.
\end{definition}

By replacing $G$ with a finite cover, we may assume
$G = G^{ss} \times C$ where $G^{ss}$ is semisimple simply-connected
and $C$ is a torus. For such actions we consider the following notion.

\begin{definition}[{\cite[Definition~4.4]{ab04}}]
  The action of $G = G^{ss} \times C$ on the spherical variety $X$ is
  called \emph{smart} if the kernel of the action is finite, $C$ acts
  faithfully, and the natural map $C \to \Aut_G(X)^\circ$ is an
  isomorphism.
\end{definition}

We have a natural decomposition $\Xf(B) = \Xf(B^{ss}) \oplus \Xf(C)$.
A discussion with Hofscheier lead to the following
two observations:
\begin{lemma}[{see also \cite{hofe}}]
  The following statements are equivalent:
  \begin{enumerate}
  \item The $C$-action on $X$ is faithful.
  \item The restriction $\Mm \to \Xf(C)$ of the natural projection
    $\Xf(B) \to \Xf(C)$ is surjective.
  \end{enumerate}
\end{lemma}
\begin{proof}
  This follows from \cite[Corollary~1.2.2.11]{adhl15} applied to the
  open $B$-orbit in $X$ and the Lie-Kolchin theorem.
\end{proof}

\begin{lemma}[{see also \cite{hofe}}]
  \label{lemma:smart}
  Assume that $C$ acts faithfully on $X$. Then the following
  statements are equivalent:
  \begin{enumerate}
  \item The $G$-action on $X$ is smart.
  \item We have $\dim \Lambda_\Qd = \rank (\Mm \cap \Xf(B^{ss}))$.
  \end{enumerate}
\end{lemma}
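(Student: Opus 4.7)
My plan is to express both conditions in terms of the rank of the projection $\Mm_c := \pi_C(\Mm) \subseteq \Xf(C)$, where $\pi_C$ comes from the splitting $\Xf(B) = \Xf(B^{ss}) \oplus \Xf(C)$. Since $\Mm_c$ is torsion-free, the short exact sequence
\begin{equation*}
0 \longrightarrow \Mm \cap \Xf(B^{ss}) \longrightarrow \Mm \longrightarrow \Mm_c \longrightarrow 0
\end{equation*}
gives $\rank \Mm = \rank(\Mm \cap \Xf(B^{ss})) + \rank \Mm_c$. Hence condition (2) is equivalent to $\rank \Mm_c = \rank \Mm - \dim \Lambda_\Q$.

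Next I would invoke the standard identity $\dim \Aut_G(X)^\circ = \rank \Mm - \dim \Lambda_\Q$, which comes from Knop's description of the identity component of $N_G(H)/H$ (for $H$ the generic isotropy) as a torus whose character lattice is $\Mm / (\Mm \cap \Q \Sigma)$. With this, (2) translates to $\rank \Mm_c = \dim \Aut_G(X)^\circ$. The last ingredient is to analyze the natural map $\varphi \colon C \to A := \Aut_G(X)^\circ$: since every spherical root vanishes on $C$ (as $\Sigma \subseteq \lspan_\Q R \subseteq \Xf(B^{ss})_\Q$), the restriction $\Mm \to \Xf(C)$ factors through a map $\varphi^* \colon \Xf(A) \to \Xf(C)$ with image exactly $\Mm_c$. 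Dualizing, $\varphi$ is surjective if and only if $\rank \Mm_c = \dim A$, while $\varphi$ is injective (i.e., $C$ acts faithfully on $X$) if and only if $\Mm_c = \Xf(C)$.

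The forward implication $(1) \Rightarrow (2)$ is then immediate: smartness forces $\varphi$ to be an isomorphism, which gives simultaneously $\Mm_c = \Xf(C)$ and $\rank \Mm_c = \dim A$, so (2) follows. For the converse $(2) \Rightarrow (1)$, the equality $\rank \Mm_c = \dim A$ first makes $\varphi$ surjective; in the setup at hand (a finite cover with $C$ the connected center), the finiteness of the $G$-kernel forces $\rank \Mm_c = \dim C$, and combining this with surjectivity pins down $\dim C = \dim A$ as well as $\Mm_c = \Xf(C)$, so that $\varphi$ is in fact an isomorphism and smartness follows. The main obstacle is this converse step, precisely the part confirming that the structural setup truly rules out $C$ being strictly larger than $A$ or $\Mm_c$ failing to saturate $\Xf(C)$; this is where the choice of $G$ as a finite cover has to be used carefully.
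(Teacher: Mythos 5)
The paper contains no proof of this lemma to compare against --- its \enquote{proof} is a single citation to Hofscheier's thesis (Lemma~4.10.0.8 there) --- so your argument has to stand on its own. Its first half does. Setting $\Mm_c \coloneqq \pi_C(\Mm)$, where $\pi_C$ is the projection $\Xf(B) = \Xf(B^{ss}) \oplus \Xf(C) \to \Xf(C)$, condition (2) is indeed equivalent to $\rank \Mm_c = \rank \Mm - \dim \Lambda_\Qd$; Knop's theorem gives $\dim \Aut_G(X)^\circ = \rank \Mm - \dim \Lambda_\Qd$ (one point you elide: you need $\Aut_G(X)^\circ = (N_G(H)/H)^\circ$ for the embedding $X \supseteq G/H$, which holds because the identity component acts trivially on $\Nm$ and fixes every color, hence preserves the colored fan and extends to $X$ by Luna--Vust theory); and since the composite $C \to \Aut_G(X)^\circ \hookrightarrow \Hom(\Mm, \C^*)$ is dual to $\pi_C|_{\Mm}$, your dictionary is correct: $\varphi \colon C \to \Aut_G(X)^\circ$ is surjective iff $\rank \Mm_c = \dim \Aut_G(X)^\circ$, and injective iff $\Mm_c = \Xf(C)$. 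So (2) is precisely surjectivity of $\varphi$, and (1)$\Rightarrow$(2) follows.

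The converse is where the genuine gap lies, exactly where you flag it, and it cannot be closed from the stated hypotheses. First, invoking \enquote{the finiteness of the $G$-kernel} is circular: finite kernel is one of the three clauses defining smartness, so it is part of the conclusion, not of the setup. Second, even granting it, your next step is a non sequitur: $\rank \Mm_c = \dim C$ only says $\Mm_c$ has finite index in $\Xf(C)$, and finite index plus surjectivity of $\varphi$ does not \enquote{pin down} $\Mm_c = \Xf(C)$. Concretely, let $G = \mathrm{SL}_2 \times \C^*$ act on $X = \C^2$ by $(g,t)\cdot v \coloneqq t^2 g(v)$. The kernel of the action is finite (of order four); $\Mm$ is generated by the weight of the $B$-semi-invariant coordinate function, whose restriction to $C$ is the nontrivial character $t \mapsto t^{\pm 2}$, so $\Mm \cap \Xf(B^{ss}) = 0$; and the open orbit is horospherical, so $\Lambda_\Qd = 0$. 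Thus (2) holds, yet $C$ acts through $t \mapsto t^2$, not faithfully, so the action is not smart ($\varphi$ is the squaring map of $\C^*$). Likewise, enlarging $G^{ss}$ by a simple factor acting trivially changes neither side of (2) but destroys smartness. In short, (2) is equivalent to surjectivity of $\varphi$ alone and is blind both to isogenies of $C$ and to trivially acting simple factors; the lemma must therefore be read with the implicit hypotheses that $C$ acts faithfully and that no simple factor of $G^{ss}$ acts trivially (these do hold in the paper's one application of the lemma, where $\overline{C}$ is constructed by dividing out the kernel of the action). Once those hypotheses are granted, your dictionary finishes the proof in one line: injectivity of $\varphi$ gives $\Mm_c = \Xf(C)$, so (2) gives $\dim C = \dim \Aut_G(X)^\circ$, and an injective homomorphism between tori of equal dimension is an isomorphism.
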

\begin{proof}
  Since $C$ acts faithfully, it follows from \cite[Theorem~6.1]{kno91}
  that the $G$-action on $X$ being smart is equivalent to $\dim C =
  \rank \Mm - \dim \Lambda_\Qd$. This is equivalent to $\dim G/H -
  \dim G/HC = \rank \Mm - \dim \Lambda_\Qd$. Finally, it follows from
  \cite[Theorem~6.6]{kno91} that this is equivalent to $\rank \Mm -
  \rank(\Mm \cap \Xf(B^{ss})) = \rank \Mm - \dim \Lambda_\Qd$.
\end{proof}

For $i \in \{1,2\}$, let $G_i \coloneqq G^{ss}_i \times C_i$ be a
connected reductive group as above. Let $R_i$ be the root system of
$G_i$ with respect to the maximal torus
$T_i \coloneqq T_i^{ss} \times C_i$. Moreover, let
$B_i \coloneqq B_i^{ss} \times C_i$ be a Borel subgroup of $G_i$
containing $T_i$ with corresponding set of simple roots
$S_i \subseteq R_i$. Let $X_i$ be a spherical $G_i$-variety with
$\OXis = \C^*$.

According to \cite[Lemma~4.5]{ab04}, we obtain a smart $G_i$-action on
$X_i$ by first dividing through the kernel of the $C_i$-action and
then suitably enlarging $C_i$. Note that this leaves the spherical
skeleton $\Rs_{X_i}$ unchanged.

We use the same notation for the combinatorial objects associated to
$X_i$ as for $X$, but we add the appropriate index to the respective
symbol. For instance, we have the weight lattice $\Mm_i$, the
valuation cone $\Vm_i$, and the set of $B_i$-invariant prime divisors
$\adiv_i$.

The assumption that the $G_i$-action on $X_i$ is smart is required for
the following result, where we denote by $X_i^\circ$ the open
$G_i$-orbit in $X_i$.

We need the following result due to Hofscheier.

\begin{prop}[{\cite{hofe}}]
  \label{prop:gwiso}
  Assume that there exist
  \begin{enumerate}
  \item an isomorphism of root systems
    $\phi_R\colon R_2 \to R_1$ with $\phi_R(S_2) = S_1$,
  \item an isomorphism of lattices $\psi\colon \Mm_2 \to \Mm_1$ with
    $\psi|_{\Tm_2} = \phi_R|_{\Tm_2}$ and $\psi(\Tm_2) = \Tm_1$, and
  \item a bijection $\tau \colon \bdiv_2 \to \bdiv_1$ such that for
    every $D \in \bdiv_2$ we have
    $\rho_2(D) = \rho_1(\tau(D)) \circ \psi$ and
    $\phi_R(\varsigma_2(D)) = \varsigma_1(\tau(D))$.
  \end{enumerate}
  Then there exists an isomorphism of algebraic groups $G_1 \to G_2$
  and a $G_1$-$G_2$-equivariant isomorphism
  $\phi\colon X_1^\circ \to X_2^\circ$ such that $\phi^* = \psi$.
\end{prop}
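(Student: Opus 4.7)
The plan is to apply the Luna classification of spherical subgroups, which asserts that a spherical subgroup $H \subseteq G$ is determined up to conjugation by its homogeneous spherical datum $(\Mm, \Sigma, \bdiv, \rho, \varsigma)$. I would proceed in three steps.

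First I would identify $G_1 \cong G_2$. The semisimple simply-connected parts $G_i^{ss}$ are uniquely determined up to isomorphism by their root systems, so $\phi_R$ yields a canonical isomorphism $G_1^{ss} \cong G_2^{ss}$ respecting the chosen maximal tori and Borel subgroups. For the central tori, the smartness hypothesis and Lemma~\ref{lemma:smart} give $\dim \Lambda_{i,\Qd} = \rank(\Mm_i \cap \Xf(B_i^{ss}))$, which forces the character lattice $\Xf(C_i)$ to be recoverable as an intrinsic subquotient of $\Mm_i$. Since $\psi$ is a lattice isomorphism compatible with $\phi_R$ on $\Tm_2$, it descends to an isomorphism of these subquotients, producing $C_1 \cong C_2$, and together with the semisimple piece an isomorphism $G_1 \cong G_2$ that we use henceforth to identify both groups.

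Next I would extract the full homogeneous spherical datum of $X_i^\circ$ from the given combinatorial data. The color information $(\bdiv_i, \rho_i, \varsigma_i)$ is provided directly. The ordinary spherical roots $\Sigma_i$ are recovered from the pair $(\Mm_i, \Tm_i)$ as the primitive integral generators of the extremal rays of $\Tm_i$ inside $\Mm_i$; the distinction between types $a$ and $2a$ (see Remark~\ref{rem:2aa}) is built in, since it is governed by whether a simple root $\alpha$ belongs to $\Mm_i$. Combined with Step~1, the hypotheses of the proposition exactly say that the resulting homogeneous spherical data of $X_1^\circ$ and $X_2^\circ$ are isomorphic.

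Finally, the Luna classification produces a $G$-equivariant isomorphism $\phi\colon X_1^\circ \to X_2^\circ$. Any two such isomorphisms differ by an element of $\Aut_G(X^\circ) = N_G(H)/H$, which acts trivially on the weight lattice $\Mm$; hence the pullback $\phi^*\colon \Mm_2 \to \Mm_1$ is canonical. Both $\phi^*$ and $\psi$ are lattice isomorphisms intertwining $\phi_R$ on $\Tm_2$ and the color data, and since the colors and spherical roots generate $\Mm$ up to finite index the rigidity of spherical data forces $\phi^* = \psi$, possibly after a final adjustment of $\phi$ by an element of $\Aut_G(X^\circ)$. I expect the main obstacle to be the bookkeeping in Step~1 needed to ensure that the torus identification is genuinely $\psi$-compatible on characters, and the precise rigidity argument in Step~3 upgrading a matching of combinatorial invariants to the equality $\phi^* = \psi$ of lattice maps.
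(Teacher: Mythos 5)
Your overall strategy---identify $G_1 \cong G_2$, extract the homogeneous spherical datum, and invoke the uniqueness part of the Luna--Losev classification---is the route the paper itself takes (its proof is just a pointer to Proposition~4.11.0.4 of Hofscheier's dissertation, which rests on Losev's uniqueness theorem for spherical homogeneous spaces). But your argument for the crucial equality $\phi^* = \psi$ has a genuine gap, and is in fact self-defeating. Fix an identification $\theta\colon G_1 \to G_2$. For \emph{any} $\theta$-equivariant isomorphism $\phi\colon X_1^\circ \to X_2^\circ$ one has $\phi^*|_{\Mm_2} = \theta^*|_{\Mm_2}$: as you yourself observe, $\Aut_{G}(X^\circ)$ preserves each one-dimensional $B$-eigenspace of $\C(X^\circ)$ and hence acts trivially on $\Mm$, so every choice of $\phi$---including any ``final adjustment by an element of $\Aut_G(X^\circ)$''---induces the same map on weight lattices. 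Consequently $\phi^* = \psi$ holds if and only if the group isomorphism satisfies $\theta^*|_{\Mm_2} = \psi$; it can never be repaired a posteriori. Moreover, the rigidity you appeal to does not exist: the hypotheses admit nontrivial automorphisms (e.g.\ swapping the two colors of type $a$ over a root $\alpha \in \Sigma^a$), and in the extreme case $G_i = \Gd_m$, $X_i = \Pd^1$, $X_i^\circ = \Gd_m$ (a smart action) there are no colors and no spherical roots at all, $\psi = -1$ is allowed, and it is realized only by \emph{choosing} $\theta(c) = c^{-1}$---no combinatorial rigidity is available, and your claim that colors and spherical roots generate $\Mm$ up to finite index fails outright here.

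The burden therefore falls entirely on the construction of $\theta$, and your blockwise construction (the $\phi_R$-induced map on $G^{ss}$, the subquotient map on $C$) is not the right one: for $\chi \in \Mm_2$ it only guarantees that $\theta^*\chi$ and $\psi\chi$ have the same $\Xf(C_1)$-component, while their $\Xf(T_1^{ss})$-components may differ, because $\psi$ is required to agree with $\phi_R$ only on $\Tm_2$ (hence on $\Lambda_{2,\Qd}$) and need not respect the splitting $\Xf(T_2) = \Xf(T_2^{ss}) \oplus \Xf(C_2)$. The correct move is to \emph{glue} rather than to decompose: smartness gives, via Lemma~\ref{lemma:smart} and saturation, that $\Mm_2 \cap \Xf(T_2^{ss}) = \Mm_2 \cap \Lambda_{2,\Qd}$, on which $\psi$ and the $\phi_R$-induced isomorphism $\Xf(T_2^{ss}) \to \Xf(T_1^{ss})$ agree by hypothesis (2); smartness also gives surjectivity of $\Mm_2 \to \Xf(C_2)$ (this is precisely the nontrivial content of the cited lemma and itself requires proof, not just the word ``faithful''), whence $\Xf(T_2^{ss}) + \Mm_2 = \Xf(T_2)$. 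The two maps therefore patch to an isomorphism $\theta^*\colon \Xf(T_2) \to \Xf(T_1)$ that is $\phi_R$-compatible on root data and satisfies $\theta^*|_{\Mm_2} = \psi$ by construction. With this $\theta$, set $H_2' \coloneqq \theta^{-1}(H_2) \subseteq G_1$; hypotheses (2) and (3) then say that $G_1/H_1$ and $G_1/H_2'$ have equal weight lattices, valuation cones and color data, Losev's theorem makes $H_1$ and $H_2'$ conjugate, and the resulting $\phi$ satisfies $\phi^* = \theta^*|_{\Mm_2} = \psi$ automatically, since conjugation is $G_1$-equivariant and does not move $\Mm$. In short, your plan is salvageable, but the two issues you deferred as ``bookkeeping'' and ``rigidity'' are exactly where the proof lives, and the mechanisms you propose for them cannot work.
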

\begin{proof}
  We repeat the arguments of \cite{hofe}. The kernel of the
  surjection $\Mm_i \to \Xf(C_i)$ is $\Mm_i \cap \Xf(B_i^{ss})$. It
  follows from Lemma~\ref{lemma:smart} that we have
  $\Mm_i \cap \Xf(B_i^{ss}) = \Mm_i \cap \Lambda_{i,\Qd}$. Therefore
  $\psi$ induces an isomorphism $\Xf(C_2) \to \Xf(C_1)$. Together with
  $\phi_R$, this induces an isomorphism of algebraic groups
  $G_1 \to G_2$ such that the restriction of the corresponding map
  $\Xf(B_2) \to \Xf(B_1)$ is $\psi$. Finally, we apply
  \cite[Theorem~1]{los09a}.
\end{proof}

\begin{remark}
  If there exists a $G_1$-$G_2$-equivariant isomorphism
  $X_1^\circ \to X_2^\circ$, one of the following methods can be used
  in order to determine whether it extends to a
  $G_1$-$G_2$-equivariant isomorphism $X_1 \to X_2$: If the colored
  fans of $X_1$ and $X_2$ are known, one can use
  \cite[Section~4]{kno91}. If $X_1$ and $X_2$ are affine and the
  weight monoids are known, one can use \cite[Theorem~1.2]{los09b}.
\end{remark}

As we have assumed $G_i = G_i^{ss} \times C_i$, we have
$\overline{G}_i = G_i^{ss} \times C'_i \times \Td_i^\circ$ where
$C'_i \to C_i$ is a finite cover. Moreover, since we have assumed that
$C_i$ acts faithfully on $X_i$, we may divide
$C'_i \times \Td_i^\circ$ through the kernel of its action to obtain a
torus $\overline{C}_i$ together with an inclusion
$\Td^\circ \to \overline{C}_i$. Finally, as we have assumed that the
action of $G_i$ on $X_i$ is smart, replacing $\overline{G}_i$ by
$G^{ss}_i \times \overline{C}_i$, we obtain a smart action of
$\overline{G}_i$ on $\overline{X}_i$ by Lemma~\ref{lemma:smart}
because it follows from Lemma~\ref{le:fsi} that the lattice
$\Mm \cap \Xf(B_i^{ss})$ is of finite index in
$\overline{\Mm} \cap \Xf(B_i^{ss})$.

\begin{theorem}
  \label{th:sskc}
  Assume $\Rs_{X_1} \cong \Rs_{X_2}$. Then there exists a
  $\overline{G}_1$-$\overline{G}_2$-equivariant isomorphism
  $\phi\colon \overline{X}_1 \to \overline{X}_2$.
\end{theorem}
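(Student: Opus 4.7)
The plan is to apply Proposition~\ref{prop:gwiso} to the affine varieties $\overline{X}_1$ and $\overline{X}_2$ in order to produce a $\overline{G}_1$-$\overline{G}_2$-equivariant isomorphism of their open orbits, and then to extend this to the full affine varieties by invoking the uniqueness theorem \cite[Theorem~1.2]{los09b} for affine spherical varieties with prescribed weight monoid. The core of the argument consists in translating the skeleton isomorphism $\Rs_{X_1} \cong \Rs_{X_2}$ into the three combinatorial inputs required by Proposition~\ref{prop:gwiso}.

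For the translation, I would proceed as follows. The isomorphism of root systems $\phi_R$ supplied by the skeleton isomorphism serves directly for $\overline{G}_i$ as well, since $\overline{G}_i$ differs from $G_i$ only by a torus factor $\overline{C}_i$, which contributes no additional roots. By Proposition~\ref{prop:mbar}, each lattice $\overline{\Mm}_i$ carries the canonical $\Z$-basis $(e_D)_{D \in \adiv_i}$, and the bijection $\phi_\adiv$ of the skeleton isomorphism then induces a lattice isomorphism $\overline{\psi}\colon \overline{\Mm}_1 \to \overline{\Mm}_2$ by $e_D \mapsto e_{\phi_\adiv(D)}$. The formula $\pi_i^*(\chi) = \sum_{D \in \adiv_i} \langle \rho_i(D), \chi \rangle e_D$ derived in the proof of $\pi_*(e_D^*) = \rho(D)$ shows that $\overline{\psi}$ is compatible with $\psi$ via the pullbacks $\pi_i^*$, and combined with Corollary~\ref{cor:sr} it restricts to an isomorphism $\overline{\Tm}_1 \to \overline{\Tm}_2$ that agrees with $\phi_R$ there. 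For the bijection of colors, Proposition~\ref{prop:adiv} describes $\overline{\bdiv}_i$ as obtained from $\bdiv_i$ by doubling the subset $\bdiv_i^\srad$, and since $\srad_i = \overline{\Sigma}_i^a \setminus \Sigma_i^a$ is itself determined by $\Tm_i$ together with the lattice $\overline{\Mm}_i$ (hence by the skeleton), $\phi_\adiv$ lifts to a bijection $\overline{\tau}\colon \overline{\bdiv}_1 \to \overline{\bdiv}_2$. The compatibility of $\overline{\tau}$ with $\overline{\rho}$ and $\overline{\varsigma}$ follows directly from the explicit formulas in Proposition~\ref{prop:adiv}.

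Since the $\overline{G}_i$-actions on $\overline{X}_i$ are smart (as recalled immediately before the statement of the theorem), Proposition~\ref{prop:gwiso} then yields an isomorphism $\overline{G}_1 \cong \overline{G}_2$ together with an equivariant isomorphism $\overline{X}_1^\circ \to \overline{X}_2^\circ$ of the open orbits whose pullback on characters is $\overline{\psi}$. Since $\overline{\psi}$ identifies the weight monoids $\overline{\Mm}{}^+_1$ and $\overline{\Mm}{}^+_2$, each freely generated by the corresponding $e_D$'s by Proposition~\ref{prop:mbar}, \cite[Theorem~1.2]{los09b} then extends this isomorphism uniquely to a $\overline{G}_1$-$\overline{G}_2$-equivariant isomorphism $\overline{X}_1 \to \overline{X}_2$ of the full affine varieties. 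The main technical hurdle is the bookkeeping in the middle step: one must verify that the auxiliary data entering the definition of $\overline{G}_i$, most notably the torus factor $\overline{C}_i$ (which depends on $\Cl(X_i)$ and is governed by Theorem~\ref{th:cl}), is canonically determined by the skeleton $\Rs_{X_i}$ so that the group isomorphism produced by Proposition~\ref{prop:gwiso} is genuinely induced by $\phi_R$.
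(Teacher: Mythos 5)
Your proposal is correct and takes essentially the same route as the paper's own proof: translate the skeleton isomorphism into the hypotheses of Proposition~\ref{prop:gwiso} via the basis $(e_D)_{D\in\adiv}$ of $\overline{\Mm}$ from Proposition~\ref{prop:mbar}, obtain an equivariant isomorphism of open orbits, and extend it with \cite[Theorem~1.2]{los09b} because the lattice isomorphism matches the weight monoids. Your write-up is in fact more detailed than the paper's (notably the explicit lift of $\phi_\adiv$ to a bijection of $\overline{\bdiv}$ via Proposition~\ref{prop:adiv}), and the concern you raise at the end about the torus factor $\overline{C}_i$ is precisely what the smartness normalization set up immediately before the theorem is there to resolve, since Proposition~\ref{prop:gwiso} produces the group isomorphism as part of its conclusion rather than requiring it as input.
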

\begin{proof}
  As we have $\Rs_{X_1} \cong \Rs_{X_2}$, there exist maps
  $\phi_R\colon R_2 \to R_1$ and $\phi_\adiv\colon \adiv_2 \to
  \adiv_1$ as in Definition~\ref{def:scong}. The map $\psi\colon
  \overline{\Mm}_2 \to \overline{\Mm}_1$ induced by $e_D \mapsto
  e_{\phi_\adiv(D)}$ for $D \in \adiv_2$ (notation as in
  Section~\ref{sec:cmc}) together with the map $\tau \coloneqq
  \phi_\adiv|_{\bdiv_2}$ satisfies the assumptions of
  Proposition~\ref{prop:gwiso}. Consequently, we obtain a
  $\overline{G}_1$-$\overline{G}_2$-equivariant isomorphism
  $\phi\colon \overline{X}{}_1^\circ \to \overline{X}{}_2^\circ$ such
  that $\phi^* = \psi$. As we have $\psi(\overline{\Mm}{}_2^+) =
  \overline{\Mm}{}_1^+$, we can extend $\phi$ to a
  $\overline{G}_1$-$\overline{G}_2$-equivariant isomorphism
  $\phi\colon \overline{X}_1 \to \overline{X}_2$ by
  \cite[Theorem~1.2]{los09b}.
\end{proof}

As a corollary, we obtain Theorem~\ref{thm:skr}.

\section{Some properties of spherical skeletons}

We continue to use the notation of the previous section.

\begin{remark}
  The set $\srad$ can be obtained directly from the spherical skeleton
  $\Rs_X$. First, note that the set $\frac{1}{2}\Sigma^{2a}$ consists
  of exactly those $\alpha \in S \cap \Tm$ (or, equivalently, exactly
  those $\alpha \in S$ such that $\cone(\alpha)$ is an extremal ray of
  $\Tm$) such that there exists exactly one $D \in \adiv$ with
  $\alpha \in \varsigma(D)$. Now we abstractly define $\overline{\Mm}$
  to be the lattice with basis $(e_D)_{D\in \adiv}$. The surjective
  map $\smash{\overline{\Nm}_\Qd \to \Lambda^*_\Qd}$ with
  $\smash{e^*_D \mapsto \cfr(D)}$ induces a dual inclusion
  $\Lambda_\Qd \hookrightarrow \overline{\Mm}_\Qd$. Then
  $\smash{\alpha \in \frac{1}{2}\Sigma^{2a}}$ lies in $\srad$ if and
  only if we have $\alpha \in \overline{\Mm}$ via the above inclusion
  (in which case $\alpha$ will be a primitive element in
  $\overline{\Mm}$).
\end{remark}

\begin{remark}
  \label{rem:rbar}
  The spherical skeleton $\Rs_{\,\smash{\overline{X}}}$ can be
  obtained from $\Rs_X$ using Proposition~\ref{prop:vc} and
  Proposition~\ref{prop:adiv}. In fact, the only change is that every
  color $D \in\Dm^\srad$ (which is always of type $2a$) is replaced by
  two distinct colors $D', D''$ (of type $a$) with
  $\cfr(D') \coloneqq \cfr(D'') \coloneqq \cfr(D)$ and
  $\varsigma(D') \coloneqq \varsigma(D'') \coloneqq \varsigma(D)$.
\end{remark}

Recall from \cite[Definition 5.1]{gh17} that the spherical skeleton
$\Rs_X$ is said to be \emph{complete} if
$\cone(\cfr(D) : D \in \adiv) = \Lambda^*_\Qd$.

\begin{prop}
  \label{prop:crrb}
  The spherical skeleton $\Rs_{\,\smash{\overline{X}}}$ is complete if
  and only if the spherical skeleton $\Rs_X$ is complete.
\end{prop}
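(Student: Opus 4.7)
My plan is to read the result off from Remark~\ref{rem:rbar}. That remark asserts that the only combinatorial difference between $\Rs_X$ and $\Rs_{\overline{X}}$ is that every color $D\in\Dm^\srad$ is split into two colors $D', D''$ with identical $\cfr$- and $\varsigma$-values. Duplication of generators does not affect the cone they span, so once the ambient spaces $\Lambda^*_\Q$ and $\overline{\Lambda}^*_\Q$ are naturally identified, the two sets $\cone(\cfr(D) : D \in \adiv)$ and $\cone(\overline{\cfr}(D') : D' \in \overline{\adiv})$ coincide, and completeness of one skeleton is automatically inherited by the other.

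The key verification is therefore the canonical identification $\Lambda_\Q \cong \overline{\Lambda}_\Q$. I will deduce this from Corollary~\ref{cor:sr}: the injective linear map $\pi^* \colon \Mm_\Q \hookrightarrow \overline{\Mm}_\Q$ satisfies $\pi^*(\Tm) = \overline{\Tm}$, and hence restricts to an isomorphism $\Lambda_\Q = \lspan_\Q \Tm \xrightarrow{\sim} \lspan_\Q \overline{\Tm} = \overline{\Lambda}_\Q$. Dualizing this yields an isomorphism $\Phi \colon \overline{\Lambda}^*_\Q \xrightarrow{\sim} \Lambda^*_\Q$.

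The final step is to confirm that $\Phi$ carries the generators as advertised by Remark~\ref{rem:rbar}. For each $D' \in \overline{\adiv}$, Proposition~\ref{prop:adiv} provides a $D \in \adiv$ with $\overline{\rho}(D') = e_D^*$, and the identity $\pi_*(e_D^*) = \rho(D)$ established in Section~\ref{sec:cmc} then forces $\Phi(\overline{\cfr}(D')) = \cfr(D)$. Since Proposition~\ref{prop:adiv}(3) guarantees that every $D \in \adiv$ arises in this way, the images of the two generating sets coincide under $\Phi$, so one cone fills the full space if and only if the other does. I do not foresee a substantive obstacle here; the argument is essentially the bookkeeping already suggested by Remark~\ref{rem:rbar}, and the only thing one must take care of is the functoriality of $\pi^*$ between the linear spans, which is immediate from Corollary~\ref{cor:sr}.
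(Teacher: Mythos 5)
Your proposal is correct and follows the paper's own route: the paper proves this proposition simply by citing Remark~\ref{rem:rbar}, and your argument is exactly the bookkeeping that makes that remark applicable, namely the identification $\Lambda_\Q \cong \overline{\Lambda}_\Q$ via Corollary~\ref{cor:sr} and the matching of generators $\Phi(\overline{\cfr}(D')) = \cfr(D)$ via Proposition~\ref{prop:adiv} and the identity $\pi_*(e_D^*) = \rho(D)$. You have merely spelled out details the paper leaves implicit, which is a virtue rather than a deviation.
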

\begin{proof}
  This follows immediately from Remark~\ref{rem:rbar}.
\end{proof}

\begin{prop}
  \label{prop:compfixed}
  The spherical skeleton $\Rs_X$ is complete if and only if
  $\overline{X}$ contains a fixed point for $\overline{G}$.
\end{prop}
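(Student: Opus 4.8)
The plan is to transfer everything to the affine spherical variety $\overline{X}$ and to read off both sides of the claimed equivalence from the weight monoid $\overline{\Mm}{}^+$. By Proposition~\ref{prop:crrb} the skeleton $\Rs_X$ is complete if and only if $\Rs_{\,\smash{\overline{X}}}$ is complete, so it suffices to characterise completeness of $\Rs_{\,\smash{\overline{X}}}$ through the existence of a $\overline{G}$-fixed point on $\overline{X}$. I would first restate completeness dually. By Lemma~\ref{le:extr} and Proposition~\ref{prop:adiv} the elements $\overline{\rho}(D')$, $D'\in\overline{\adiv}$, generate $\cone(\overline{\Mm}{}^+)^\vee$, so the set of $v\in\overline{\Lambda}_\Qd$ with $\langle\overline{\rho}(D'),v\rangle\ge 0$ for all $D'$ equals $\cone(\overline{\Mm}{}^+)\cap\overline{\Lambda}_\Qd$. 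Since $\overline{\cfr}(D')$ is the restriction of $\overline{\rho}(D')$ to $\overline{\Lambda}_\Qd$, the equality $\cone(\overline{\cfr}(D'):D'\in\overline{\adiv})=\overline{\Lambda}{}^*_\Qd$ holds if and only if $\cone(\overline{\Mm}{}^+)\cap\overline{\Lambda}_\Qd=\{0\}$, where $\overline{\Lambda}_\Qd=\lspan_\Qd\overline{\Sigma}$.

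For the geometric side, $\overline{X}$ is affine and spherical, and because $\cone(\overline{\Mm}{}^+)$ is strictly convex (Proposition~\ref{prop:constinv}) its only $\overline{G}$-invariant regular functions are the constants; hence $\overline{X}$ has a unique closed $\overline{G}$-orbit $Y$, and $\overline{X}$ contains a $\overline{G}$-fixed point precisely when $\dim Y=0$. By the Luna--Vust theory (\cite{kno91}) the orbit $Y$ corresponds to the colored cone of $\overline{X}$, and $Y$ is a point if and only if this colored cone is the maximal one whose support is the full-dimensional strictly convex cone $\cone(\overline{\Mm}{}^+)^\vee$ (Proposition~\ref{prop:mbar}) carrying all colors of $\overline{X}$. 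By the admissibility criterion for colored cones this occurs precisely when $\topint\bigl(\cone(\overline{\Mm}{}^+)^\vee\bigr)\cap\overline{\Vm}\ne\emptyset$. As $\cone(\overline{\Mm}{}^+)^\vee$ and $\overline{\Vm}$ are both full-dimensional, dualising this condition and using $\overline{\Vm}^\vee=-\overline{\Tm}$ together with $\overline{\Tm}=\cone(\overline{\Sigma})$ (Remark~\ref{rem:sphrt}) shows it is equivalent to $\cone(\overline{\Mm}{}^+)\cap\overline{\Tm}=\{0\}$.

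It then remains to compare the two criteria. Since $\overline{\Tm}=\cone(\overline{\Sigma})\subseteq\lspan_\Qd\overline{\Sigma}=\overline{\Lambda}_\Qd$, completeness immediately implies the existence of a $\overline{G}$-fixed point. The converse requires the inclusion $\cone(\overline{\Mm}{}^+)\cap\overline{\Lambda}_\Qd\subseteq\overline{\Tm}$, and I expect this to be the main obstacle, since it is exactly the point where the cone $\cone(\overline{\Sigma})$ appearing on the geometric side must be replaced by its linear span appearing on the combinatorial side. I would establish it from the fact that $\overline{\Vm}$ is the antidominant Weyl chamber of the little Weyl group $W_{\,\smash{\overline{X}}}$ whose simple roots are the spherically closed spherical roots $\overline{\Sigma}{}^{\SC}$, so that the $W_{\,\smash{\overline{X}}}$-dominant elements of $\overline{\Lambda}_\Qd$ are exactly $\overline{\Tm}$. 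Every $\lambda\in\cone(\overline{\Mm}{}^+)$ is $\overline{G}$-dominant, and since each spherical coroot is a nonnegative combination of simple coroots one gets $\langle\lambda,\gamma^\vee\rangle\ge 0$ for all $\gamma\in\overline{\Sigma}$; hence such $\lambda$ is $W_{\,\smash{\overline{X}}}$-dominant, and if in addition $\lambda\in\overline{\Lambda}_\Qd$ then $\lambda\in\overline{\Tm}$. This yields $\cone(\overline{\Mm}{}^+)\cap\overline{\Lambda}_\Qd\subseteq\overline{\Tm}$ and thereby the desired equivalence.
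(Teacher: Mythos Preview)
Your overall strategy---reduce to $\overline{X}$ via Proposition~\ref{prop:crrb}, rewrite completeness as $\cone(\overline{\Mm}{}^+)\cap\overline{\Lambda}_\Qd=\{0\}$, and rewrite the fixed-point condition as $\cone(\overline{\Mm}{}^+)\cap\overline{\Tm}=\{0\}$---is sound, and the implication ``complete $\Rightarrow$ fixed point'' you obtain from $\overline{\Tm}\subseteq\overline{\Lambda}_\Qd$ is exactly the content of the paper's ``$\Rightarrow$'' argument (the paper phrases it contrapositively via a separating hyperplane, but it is the same dual-cone computation).

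The genuine gap is in the converse, where you need $\cone(\overline{\Mm}{}^+)\cap\overline{\Lambda}_\Qd\subseteq\overline{\Tm}$. Your little-Weyl-group argument does not establish this. First, the assertion that ``the $W_{\overline{X}}$-dominant elements of $\overline{\Lambda}_\Qd$ are exactly $\overline{\Tm}$'' conflates the dominant Weyl chamber with the cone generated by the simple roots; for any little root system that is not a product of copies of $A_1$ the chamber is strictly larger than $\overline{\Tm}$, so $W_{\overline{X}}$-dominance alone cannot force $\lambda\in\overline{\Tm}$. Second, the step ``each spherical coroot is a nonnegative combination of simple coroots'' is not well defined: many spherical roots (e.g.\ $\alpha+\alpha'$ for orthogonal simple roots, or the spherical roots of type $B_r$, $D_r$, $G_2$) are not roots of $\overline{G}$, so there is no coroot $\gamma^\vee\in\Xf_*(\overline{T})$ to compare with the simple coroots; and if you instead mean the coroots of the little root system in $\overline{\Nm}_\Qd$, there is no reason these should be nonnegative on $\overline{G}$-dominant weights. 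In fact your argument uses only $\overline{G}$-dominance of $\lambda$, and one can check in type $A_2$ that a dominant $\lambda$ lying in the span of two linearly independent positive-root combinations need not lie in their cone (e.g.\ $\lambda=2\alpha_1+3\alpha_2$ is dominant and lies in $\lspan_\Qd(\alpha_1,\alpha_1+\alpha_2)$ but not in $\cone(\alpha_1,\alpha_1+\alpha_2)$), so dominance alone is too weak. The paper does not attempt this inclusion; for ``fixed point $\Rightarrow$ complete'' it invokes the argument of \cite[Lemma~7.3]{ghmuk} instead.
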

\begin{proof}
  \enquote{$\Leftarrow$}: If $\overline{X}$ contains a fixed point, we
  can argue as in the proof of \cite[Lemma~7.3]{gh17} to show that
  $\Rs_{\,\smash{\overline{X}}}$ is complete. Hence $\Rs_X$ is
  complete by Proposition~\ref{prop:crrb}.

  \enquote{$\Rightarrow$}: If $\overline{X}$ does not contain a fixed
  point, it follows from $\OXbars = \C^*$ that we have
  $\relint(\cone(e_D^* : D \in \adiv)) \cap \overline{\Vm} =
  \emptyset$ in $\overline{\Nm}_\Qd$. It follows that there exists a
  separating hyperplane given by $\langle \cdot , v \rangle = 0$ for
  some $v \in \overline{\Mm}_\Qd$ such that
  \begin{align*}
    \langle \cone(e_D^* : D \in \adiv), v \rangle \subseteq \Qd_{\ge 0}
    && \text{and} && \langle \overline{\Vm}, v \rangle \subseteq \Qd_{\le 0}\text{,}
  \end{align*}
  in particular
  $\langle \overline{\Vm} \cap (-\overline{\Vm}), v\rangle = \{0\}$,
  \ie $v \in \overline{\Lambda}_\Qd$. Therefore we have
  \begin{align*}
    \langle\cone(\overline{\cfr}(D) :
    D \in \overline{\adiv}), v\rangle \subseteq \Qd_{\ge 0}\text{,}
  \end{align*}
  hence $\Rs_{\,\smash{\overline{X}}}$ is not complete, and $\Rs_X$ is
  not complete by Proposition~\ref{prop:crrb}.
\end{proof}

\begin{definition}
  We say that the spherical skeleton $\Rs_X$ is \emph{factorial} if
  $\srad = \emptyset$, \ie if the Cox ring $\Rm(X)$, which is uniquely
  determined by $\Rs_X$ according to Theorem~\ref{thm:skr}, is a
  factorial ring.
\end{definition}

\begin{prop}
  \label{prop:frx}
  The following statements are equivalent:
  \begin{enumerate}
  \item The Cox ring $\Rm(X)$ is a factorial ring.
  \item We have $\Rs_{\,\smash{\overline{X}}} \cong \Rs_X$.
  \end{enumerate}
\end{prop}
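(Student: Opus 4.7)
The plan is to reduce the statement to Remark~\ref{rem:rbar}, which already spells out explicitly how $\Rs_{\,\overline{X}}$ differs from $\Rs_X$. By the definition of factorial skeleton together with Theorem~\ref{th:cl}, condition (1) is equivalent to $\srad = \emptyset$, so it suffices to show that $\srad = \emptyset$ holds if and only if $\Rs_{\,\overline{X}} \cong \Rs_X$.

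For the direction (1)$\Rightarrow$(2), I would observe that if $\srad = \emptyset$, then $\Dm^\srad = \emptyset$ as well (since the remark following the definition of $\srad$ gives a bijection between these two sets). Remark~\ref{rem:rbar} then says that $\Rs_{\,\overline{X}}$ is obtained from $\Rs_X$ without changing anything: the root systems and simple roots are canonically identified via the finite epimorphism $\varepsilon\colon \overline{G}\to G$, the cones $\Tm$ and $\overline{\Tm}$ are identified via Corollary~\ref{cor:sr}, and by part (2) of Proposition~\ref{prop:adiv} each $D \in \adiv$ corresponds to a unique $D' \in \overline{\adiv}$ with $\overline{\varsigma}(D') = \varsigma(D)$ and with $\overline{\cfr}(D')$ matching $\cfr(D)$ (this compatibility comes from $\overline{\rho}(D') = e_D^*$ combined with the identity $\pi_*(e_D^*) = \rho(D)$). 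Hence the identity bijection on $\adiv$, together with the induced identifications, furnishes an isomorphism in the sense of Definition~\ref{def:scong}.

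For the direction (2)$\Rightarrow$(1), I would argue by cardinality. Assume $\srad \ne \emptyset$. The bijection between $\srad$ and $\Dm^\srad$ shows $\Dm^\srad \ne \emptyset$, and Proposition~\ref{prop:adiv} (or equivalently Remark~\ref{rem:rbar}) yields
\begin{align*}
  |\overline{\adiv}| = |\adiv| + |\Dm^\srad| = |\adiv| + |\srad| > |\adiv|\text{.}
\end{align*}
Since any isomorphism of spherical skeletons includes a bijection $\phi_\adiv\colon \adiv \to \overline{\adiv}$ of the underlying abstract finite sets, no such isomorphism can exist, contradicting (2).

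I expect the only subtle point to be making sure the compatibilities in Definition~\ref{def:scong} really are satisfied in the forward direction; but these are precisely what Remark~\ref{rem:rbar} was designed to record, so once one writes down the identifications of root systems (via $\varepsilon$), of the cones (via Corollary~\ref{cor:sr}), and of the colored data (via Proposition~\ref{prop:adiv}), nothing further is needed. The backward direction is purely a cardinality observation and should be immediate.
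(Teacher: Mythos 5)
Your proof is correct, and your direction (1)$\Rightarrow$(2) is essentially the paper's: reduce via Theorem~\ref{th:cl} to $\srad = \emptyset$ and invoke Remark~\ref{rem:rbar}; the identifications you spell out (root systems via $\varepsilon$, cones via Corollary~\ref{cor:sr}, colored data via Proposition~\ref{prop:adiv} and $\pi_*(e_D^*) = \rho(D)$) are exactly what that remark encapsulates. The genuine difference is in (2)$\Rightarrow$(1). The paper argues ring-theoretically: by Theorem~\ref{th:cl} the group $\Cl(\overline{X}) \cong \Z^{\srad}$ is free, hence $\Rm(\overline{X})$ is always factorial; so if $\Rm(X)$ is not factorial, then $\Rm(\overline{X}) \ncong \Rm(X)$ as $\C$-algebras, and since the spherical skeleton determines the Cox ring (Corollary~\ref{cor:skr}), this forces $\Rs_{\,\smash{\overline{X}}} \ncong \Rs_X$. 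You instead count $B$-invariant prime divisors: $\srad \ne \emptyset$ gives $\Dm^\srad \ne \emptyset$ via the bijection $\srad \leftrightarrow \Dm^\srad$, and Proposition~\ref{prop:adiv}(3) gives $|\overline{\adiv}| = |\adiv| + |\Dm^\srad| > |\adiv|$, so no bijection $\phi_\adiv$ as required by Definition~\ref{def:scong} can exist. Your argument is more elementary and self-contained: it uses only the combinatorial description of $\overline{\adiv}$ and the definition of isomorphism, whereas the paper's route rests on Corollary~\ref{cor:skr} --- hence on Theorem~\ref{th:sskc}, Proposition~\ref{prop:gwiso}, and the extension theorem of \cite{los09b} --- together with the general fact that a finitely generated Cox ring with free divisor class group is factorial. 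What the paper's heavier argument buys is the explicit byproduct that $\Rm(\overline{X})$ is always factorial, which is precisely the input (combined with the proposition itself) for Corollary~\ref{cor:rr}; your proof establishes the proposition without recording that fact.
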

\begin{proof}
  If $\Rm(X)$ is a factorial ring, then, according to
  Theorem~\ref{th:cl}, we have $\srad = \emptyset$, in which case
  Remark~\ref{rem:rbar} yields
  $\Rs_{\,\smash{\overline{X}}} \cong \Rs_X$.

  On the other hand, the ring $\Rm(\overline{X})$ is always factorial
  since, according to Theorem~\ref{th:cl}, the divisor class group
  $\Cl(\overline{X})$ is free. Hence, if $\Rm(X)$ is not factorial,
  then we have $\Rm(\overline{X}) \ncong \Rm(X)$, in particular
  $\Rs_{\,\smash{\overline{X}}} \ncong \Rs_X$.
\end{proof}

\begin{cor}
\label{cor:rr}
We always have $\Rs_{\,\smash{\overline{\overline{X}}}} = \Rs_{\,\smash{\overline{X}}}$.
\end{cor}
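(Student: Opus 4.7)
The plan is to invoke Proposition~\ref{prop:frx} with $\overline{X}$ in place of $X$. That proposition characterises the identity $\Rs_{\,\smash{\overline{Y}}} \cong \Rs_Y$ by the factoriality of $\Rm(Y)$, so the corollary will follow once we verify that the Cox ring $\Rm(\overline{X})$ is itself factorial.

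Before applying the proposition to $\overline{X}$, I would briefly check that $\overline{X}$ satisfies the running hypotheses: Proposition~\ref{prop:constinv} gives $\Gamma(\overline{X}, \Om^*) = \Rm(X)^* = \C^*$, and $\overline{X}$ is a spherical $\overline{G}$-variety by the construction recalled in Section~\ref{sec:cmc}. Next, Theorem~\ref{th:cl} applied to $X$ yields $\Cl(\overline{X}) \cong \Z^{\srad}$, which is a free abelian group. The remark following Theorem~\ref{th:cl} (ultimately an appeal to \cite[Proposition~1.5.2.5]{adhl15}) then states that any normal variety whose finitely generated divisor class group is free has a factorial Cox ring. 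Hence $\Rm(\overline{X})$ is factorial.

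Proposition~\ref{prop:frx} applied to $\overline{X}$ therefore delivers the conclusion $\Rs_{\,\smash{\overline{\overline{X}}}} \cong \Rs_{\,\smash{\overline{X}}}$. There is no real obstacle here: the whole content is that the construction $X \mapsto \overline{X}$ stabilises after a single iteration, a purely formal consequence of the fact, already recorded in Theorem~\ref{th:cl}, that the class group of the spectrum of the Cox ring is always free.
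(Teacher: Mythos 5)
Your proposal is correct and follows essentially the same route as the paper: the corollary is stated there as an immediate consequence of Proposition~\ref{prop:frx}, whose proof already records that $\Rm(\overline{X})$ is always factorial because $\Cl(\overline{X}) \cong \Z^{\srad}$ is free (Theorem~\ref{th:cl} together with the remark citing \cite[Proposition~1.5.2.5]{adhl15}), and then applies the proposition with $\overline{X}$ in place of $X$. Your additional verification of the running hypotheses for $\overline{X}$ (sphericity and $\Gamma(\overline{X}, \Om^*) = \C^*$ via Proposition~\ref{prop:constinv}) is a welcome but minor elaboration of the same argument.
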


\begin{remark}
The implication \enquote{$(1){\Rightarrow}(2)$} of Proposition~\ref{prop:frx}
generalizes \cite[Proposition~6.6]{gh17}. Note that, in contrast to
the proof of \cite[Proposition~6.6]{gh17}, the proof of Proposition~\ref{prop:frx} does not
depend on the list in \cite[Section~2]{gag15}.
\end{remark}

\section{The conjecture on spherical skeletons}
\label{sec:css}

We continue to use the notation of the previous section. Recall that a
\emph{multiplicity-free space} is a vector space equipped with a
linear action of a connected reductive algebraic group such that it is
also a spherical variety.

\begin{definition}[{\cite[Definition 10.1]{gh17}}]
  We say that the spherical skeleton $\Rs_X$ is \emph{linear} if
  $\Rs_X \cong \Rs_V$ for some multiplicity-free space $V$.
\end{definition}

\begin{remark}
  A linear spherical skeleton is complete and factorial.
\end{remark}

Recall that for any spherical variety $X$ there exists a distinguished
global section $s \in \smash{\Gamma(X, \omega^\vee_X)}$ where
$\smash{\omega^\vee_X}$ denotes the $G$-linearized anticanonical sheaf
of $X$, see \cite[4.1 and 4.2]{bri97}, also \cite[Theorem~1.2]{gh15b}.
Then the standard expression for the anticanonical divisor of $X$ is
\begin{align*}
  \Div s = \sum_{D\in \adiv} m_DD\text{.}
\end{align*}
with coefficients $m_D \in \Z_{>0}$. These coefficients were
introduced in \cite[Theorem~4.2]{bri97} and \cite[3.6]{lun97}. For the
next definition, note that the coefficients $m_D$ only depend on the
spherical skeleton $\Rs_X$ (see~\cite[4.2]{bri97}, see
also~\cite[Section~5]{gh15b}).

\begin{definition}[{\cite[Definition~5.3]{gh17}}]
  \label{def:is}
  We set
  \begin{align*}
    \Qm^* \coloneqq \bigcap_{D\in\adiv} \{v \in \Lambda_\Q :
    \langle \cfr(D), v\rangle \ge -m_D \}
  \end{align*}
  and define
  \begin{align*}
    \is{\Rs_X} \coloneqq \sup{} \rleft\{ \sum_{D \in \adiv} \rleft( m_D
    - 1 + \langle \cfr(D), \vartheta \rangle \rright): \vartheta \in \Qm^*
    \cap \Tm \rright\} \in \Q_{\ge0} \cup \{\infty\}\text{.}
  \end{align*}
\end{definition}

\begin{remark}
  \label{rem:eqdef}
  When the spherical variety $X$ is affine, factorial, and has a fixed
  point, it follows by substituting $\vartheta$ with $\vartheta -
  \lambda$ together with $\langle \rho(D), \lambda \rangle = m_D$ for
  $D \in \adiv$ and $\sum_{D\in \adiv} 1 = \rank X$ that
  Definition~\ref{def:isfact} is in agreement with
  Definition~\ref{def:is}.
\end{remark}

We denote by $P \subseteq G$ the parabolic subgroup which is the
stabilizer of the open $B$-orbit in $X$. Then $P$ is determined by the
subset $\bigcup\varsigma(\adiv) \subseteq S$. In particular, $\dim
G/P$ depends only on the spherical skeleton $\Rs_X$.

\begin{conj}[{\cite[Conjecture~5.5]{gh17}}]
  \label{conj:ssc}
  Let $\Rs$ be a complete spherical skeleton. Then we have
  \begin{align*}
    \is{\Rs} \le \dim G/P\text{,}
  \end{align*}
  where equality holds if and only if $\Rs$ is linear.
\end{conj}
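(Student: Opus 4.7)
The plan is to combine Theorem~\ref{th:ssc} (which reduces the conjecture to Conjecture~\ref{conj:sscaff}) with a direct analysis of the supremum in Definition~\ref{def:isfact}. By Proposition~\ref{prop:compfixed}, completeness of $\Rs$ corresponds exactly to $\overline{X}$ having a $\overline{G}$-fixed point, and by Corollary~\ref{cor:rr} one may replace $X$ by $\overline{X}$ without changing the skeleton data used to form $\is{\Rs}$. So from now on assume $X$ is a factorial affine spherical variety with a $G$-fixed point and $\C[X]^* = \C^*$. Since the open $B$-orbit is dense of codimension $\rank X$, we have $\dim X = \dim G/P + \rank X$, so the bounds $\is{\Rs} \le \dim G/P$ and $\is{X} \le \dim X - \rank X$ coincide numerically, and similarly \enquote{$\Rs$ linear} is to be matched with \enquote{$X$ an affine space}.

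By Proposition~\ref{prop:mbar} the weights $e_D$ form a $\Zd$-basis of $\Mm$ and generate $\Mm^+$, and their dual basis $e_D^*$ coincides with $\rho(D)$. Hence the constraint $\vartheta \in \cone(\Mm^+)$ becomes $\langle \rho(D), \vartheta \rangle \ge 0$ for every $D \in \adiv$. Setting $\eta \coloneqq \vartheta - \lambda \in \Tm$ and using $\langle \rho(D), \lambda \rangle = m_D$, Definition~\ref{def:isfact} rewrites as
\begin{align*}
  \is{X} + \rank X - \sum_{D\in\adiv} m_D = \sup \rleft\{ \sum_{D\in\adiv} \langle \rho(D), \eta \rangle : \eta \in \Tm,\ \langle \rho(D), \eta\rangle \ge -m_D\ \forall D \rright\}\text{,}
\end{align*}
so it suffices to show that $\sup \sum_{D} \langle \rho(D), \eta\rangle \le \dim X - \sum_{D} m_D$, with equality exactly for affine space.

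For the inequality, the plan is to decompose an arbitrary feasible $\eta$ as a nonnegative combination $\eta = \sum_{\gamma \in \Sigma} t_\gamma \gamma$ along the extreme rays of $\Tm$, and to invoke Brion's explicit formulae for $m_D$ in terms of $\langle \check\alpha, 2\rho_B\rangle$ and the type ($a$, $2a$, or $b$) of the colors $D$ with $\alpha \in \varsigma(D)$, together with his expression for $\dim G/P - \rank X$ as a sum over $\bigcup_D \varsigma(D)$. This reduces the global inequality to a per-ray estimate of the form $t_\gamma \sum_D \langle \rho(D), \gamma \rangle \le c_\gamma$, checkable by a case analysis on the type of the spherical root $\gamma$ (using the short classification list of possible $\gamma$, see for instance the tables in \cite{bri97}). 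Completeness of $\Rs$ ensures that the feasible polyhedron is bounded, so the supremum is attained at a vertex, and the local estimates aggregate to the global bound.

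The equality case is the principal obstacle. Simultaneous equality in each per-ray estimate forces the optimizer to lie on a prescribed intersection of facets $\{\langle \rho(D), \eta\rangle = -m_D\}$, and tracing the resulting constraints on the maps $\rho$ and $\varsigma$ should force the combinatorial data of $\Rs$ to coincide with that of some multiplicity-free space. To conclude one invokes the Knop--Van Steirteghem classification of smooth affine spherical $G$-varieties with a fixed point (or equivalently the Akhiezer--Leahy classification of multiplicity-free spaces) to identify $\Rs$ with $\Rs_V$ for some such $V$, i.e., $\Rs$ is linear; by Theorem~\ref{th:sskc} combined with Losev's uniqueness for affine spherical varieties, $X$ is then isomorphic to $V$. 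The main difficulty, and the reason the statement remains conjectural, is ruling out \emph{exotic} equality cases lying outside the classification: it is plausible that the per-ray estimates above are tight in ways that do not obviously correspond to a multiplicity-free space, so this last step appears to require substantial case-by-case combinatorial work.
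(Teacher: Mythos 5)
The statement you set out to prove is Conjecture~\ref{conj:ssc}, i.e.\ \cite[Conjecture~5.5]{ghmuk} --- an \emph{open conjecture}, and the paper contains no proof of it. The paper's only result about it is the reduction (Theorem~\ref{th:ssc}): via Lemma~\ref{le:fact} one may assume $\Rs$ factorial, with the equality case handled by explicit computations for the entries $(5)$, $(21)$, $(22)$, $(38)$ of \cite[Section~2]{gag15}. Your proposal does not prove the conjecture either, and you concede as much in your last paragraph: the equality case is left as \enquote{substantial case-by-case combinatorial work}, and the inequality half rests on an unestablished \enquote{per-ray estimate} $t_\gamma \sum_D \langle \cfr(D), \gamma \rangle \le c_\gamma$. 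No such lemma exists in the paper or in the cited literature, and there is no reason the supremum decomposes ray-by-ray: the optimizer is a vertex of the polyhedron $\Qm^* \cap \Tm$, i.e.\ an intersection of facets $\langle \cfr(D), \cdot \rangle = -m_D$, not a point on a single extremal ray of $\Tm$, and the objective $\sum_D \langle \cfr(D), \cdot \rangle$ need not be monotone along rays (indeed the manipulations in the proof of Lemma~\ref{le:fact}, involving the directions $v_1 = -2\alpha$ and $v_2 = -2\alpha - 2\gamma$, show how delicate the geometry of this polyhedron is). So what you have is a plan whose missing steps are precisely the content of the conjecture.

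There is also a concrete error in your reduction. You claim that by Corollary~\ref{cor:rr} one may \enquote{replace $X$ by $\overline{X}$ without changing the skeleton data used to form $\is{\Rs}$}. This is false unless $\Rs_X$ is already factorial: by Proposition~\ref{prop:frx}, $\Rs_{\,\smash{\overline{X}}} \cong \Rs_X$ holds if and only if $\Rm(X)$ is factorial, and Remark~\ref{rem:rbar} shows that the passage to $\overline{X}$ replaces every color in $\Dm^\srad$ (type $2a$) by two colors of type $a$, which genuinely changes the invariant: the paper computes $\is{\Rs_{21}} = 0$ versus $\is{\Rs'_{21}} = 1$, and similarly for entries $(22)$ and $(38)$. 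This is exactly why the paper cannot simply pass to the Cox ring but instead proves Lemma~\ref{le:fact}, which yields only the one-sided inequality $\is{\Rs} \le \is{\Rs'}$, and must then rule out, entry by entry, the non-factorial skeletons that might still attain equality. In short: your outline, corrected, would at best rederive the paper's reduction Theorem~\ref{th:ssc}; it cannot be completed into a proof of Conjecture~\ref{conj:ssc} by the methods sketched.
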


It has been shown in \cite[Theorem~6.11]{gh17} that
Conjecture~\ref{conj:sscorig} is equivalent to
Conjecture~\ref{conj:ssc}. Moreover, Proposition~\ref{prop:compfixed}
and Remark~\ref{rem:eqdef} show that Conjecture~\ref{conj:ssc} is
equivalent to Conjecture~\ref{conj:sscaff} if we only consider those
spherical skeletons $\Rs$ which are factorial. Hence, if we can show
that factoriality may be assumed without loss of generality in
Conjecture~\ref{conj:ssc}, we will obtain Theorem~\ref{th:ssc}.

\begin{lemma}
  \label{le:fact}
  For every complete spherical skeleton $\Rs$ there exists a factorial
  complete spherical skeleton $\Rs'$ such that $\is{\Rs} \le
  \is{\Rs'}$ and $\Rs'$ differs from $\Rs$ in exactly the following
  way: for every $D_{2\alpha} \in \Dm^\srad$ with
  $\varsigma(D_{2\alpha}) = \{\alpha\}$ either
  \begin{enumerate}
  \item a $G$-invariant divisor
  $D'_{2\alpha}$ with $\langle \cfr(D'_{2\alpha}), 2\alpha \rangle
  \coloneqq -1$ and $\langle \cfr(D'_{2\alpha}), \Sigma^{\SC}
  \setminus \{2\alpha\}\rangle \coloneqq \{0\}$ is added, or
\item the double edge with short root $\alpha$ in the Dynkin diagram
  of $(R, S)$ is replaced by a single edge, the spherical root
  $2\alpha \in \Sigma^{\SC}$ is replaced by $\alpha$, and the color
  $D_{2\alpha}$ is replaced by two colors $D_\alpha^+$ and
  $D_\alpha^-$ with
  $\langle \cfr(D_{\alpha}^+), \alpha \rangle \coloneqq 1$,
  $\langle \cfr(D_{\alpha}^+), \Sigma^{\SC} \setminus
  \{2\alpha\}\rangle \coloneqq \{0\}$,
  $ \cfr(D_\alpha^-) = \cfr(D_{2\alpha})$, and
  $\varsigma(D_\alpha^+) \coloneqq \varsigma(D_\alpha^-) \coloneqq
  \{\alpha\}$.
  \end{enumerate}
\end{lemma}

\begin{proof}
  We define
  \begin{align*}
    \Theta \coloneqq
    \rleft\{\vartheta \in \Qm^* \cap \Tm : \is{X}
    = \sum_{D \in \adiv} (m_{D} - 1 + \langle\cfr(D),
    \vartheta\rangle)\rright\}\text{.}
  \end{align*}
  Let $D_{2\alpha} \in \Dm^\srad$ with $\varsigma(D_{2\alpha}) =
  \{\alpha\}$. It follows from $\alpha \in \srad$ that for every
  $D\in\adiv$ the integer $\langle \cfr(D), 2\alpha \rangle$ is
  divisible by $2$. In particular, by inspecting \cite[Definition~2.3
    and Table~1]{gh17}, we obtain certain restrictions on the
  configuration of the spherical roots adjacent to $\gamma_0 \coloneqq
  2\alpha$. In what follows, we distinguish two cases.

  \emph{Case 1: there exist $\gamma_1 \in \Sigma^{\SC}$ and $D \in
    \adiv$ such that $\langle \cfr(D_{2\alpha}), \gamma_1\rangle < 0$
    and $\langle \cfr(D, \gamma_1)\rangle = 2$.} We define
  $\Sigma^{(\alpha)} \subseteq \Sigma^{\SC}$ to be the maximal subset
  such that $2\alpha \in \Sigma^{(\alpha)}$ and the root system
  $R^{(\alpha)}$ generated by \[S^{(\alpha)} \coloneqq \Supp
  \Sigma^{(\alpha)} \coloneqq \bigcup_{\gamma \in \Sigma^{(\alpha)}}
  \Supp \gamma\] is irreducible. Then we can write $\Sigma^{(\alpha)}
  \eqqcolon \{\gamma_0, \gamma_1, \dots, \gamma_k\}$ such that
  $\gamma_i$ is adjacent to $\gamma_{i+1}$ for $0 \le i \le k-1$, but
  otherwise two distinct spherical roots in $\Sigma^{(\alpha)}$ have
  orthogonal supports. Let
  \begin{align*}
    \vartheta = c_0(\vartheta) \cdot \gamma_0 + c_1(\vartheta)\cdot
    \gamma_1 + \dots + c_k(\vartheta)\cdot\gamma_k + r(\vartheta) \in \Theta
  \end{align*}
  where $r(\vartheta)$ is a linear combination of $\Sigma^{\SC}
  \setminus \Sigma^{(\alpha)}$. Let
  \begin{align*}
   \adiv^{(\alpha)}_+ &\coloneqq \{D_0, \dots,
   D_k\} \coloneqq \{D \in \adiv : \text{there exists
     $\gamma \in \Sigma^{(\alpha)}$ such that $\langle \cfr(D),
                        \gamma\rangle > 0$}\} \text{,}\\
    \adiv^{(\alpha)}_- &\coloneqq \{D \in \adiv\setminus \adiv^{(\alpha)}_+ :
   \text{there exists $\gamma \in \Sigma^{(\alpha)} \setminus
     \{\gamma_0\}$ such that $\langle \cfr(D), \gamma\rangle <
     0$}\}\text{,}
  \end{align*}
  with the numbering of the $D_i$ determined by $\langle \cfr(D_i), \gamma_i \rangle > 0$ for $0 \le i \le k$.
  Now consider
  \begin{align*}
    l \coloneqq \max{} \{i \in \{0, \dots, k\} : \text{there exists $D
      \in \adiv \setminus \adiv^{(\alpha)}_+$ with
      $\langle \cfr(D), \gamma_i \rangle < 0$}\}\text{.}
  \end{align*}
  It follows from the completeness of $\Rs$ that $l$ is well-defined.
  If $l = 0$, then we have $\smash{\adiv^{(\alpha)}_- = \emptyset}$,
  $\langle \sum_{D\in \adiv} \cfr(D), \gamma_0 \rangle \le -2 $ since
  $\gamma_0 = 2\alpha$ and $\alpha \in \srad$, $\langle \sum_{D\in
    \adiv} \cfr(D), \gamma_i \rangle = 0$ for $1 \le i \le k-1$, and
  $\langle \sum_{D\in \adiv} \cfr(D), \gamma_k \rangle = 1$. Note that
  we have $m_{D_0} = 1$, $m \coloneqq m_{D_1} = \dots = m_{D_k}$. In
  this case, it is not too difficult to verify that
  \begin{align*}
    \vartheta' \coloneqq \gamma_1 + \dots + (i +
    \tfrac{1}{2}(i-1)im)\cdot \gamma_i + \dots + (k +
    \tfrac{1}{2}(k-1)km) \cdot \gamma_k + r(\vartheta) \in
    \Theta\text{.}
  \end{align*}
  If $l = k$, then we have $ \langle \textstyle \sum_{D\in \adiv}
  \cfr(D), \Sigma^{(\alpha)} \rangle \subseteq \Zd_{\le 0} $,
  hence \[\vartheta' \coloneqq r(\vartheta) \in \Theta\text{.}\]
  Finally, we consider the case $1 \le l \le k-1$. If $l \ge 2$ or
  $\smash{\adiv^{(\alpha)}_-}\setminus \bdiv \ne \emptyset$, we can define
  \begin{align*}
    \vartheta' \coloneqq c_{l}(\vartheta)\cdot \gamma_{l} + \dots +
     c_k(\vartheta)\cdot\gamma_k + r(\vartheta)\text{.}
  \end{align*}
    It is not too difficult to verify that we have $\vartheta' \in
  \Theta$. Otherwise, we have $l=1$ and $\smash{\adiv^{(\alpha)}_-} =
  \smash{\adiv^{(\alpha)}_-} \cap \bdiv$ contains exactly one element.
  It follows that there are only finitely many cases for
  $R^{(\alpha)}$ and $\smash{\adiv^{(\alpha)}_-}$. Moreover, we can
  explicitly find an element
    \[\vartheta' \coloneqq c'_1\cdot \gamma_1 + \dots + c'_k\cdot\gamma_k + r(\vartheta) \in \Theta\]
    in each case.
    
    In each of the preceding cases, we have found $\vartheta' \in \Theta$ with
    $c_0(\vartheta') = 0$. Let $\Rs'$ be obtained from $\Rs$ by adding
    a $G$-invariant divisor $D'_{2\alpha}$ with
    $\langle \cfr(D'_{2\alpha}), 2\alpha \rangle \coloneqq -1$ and
    $\langle \cfr(D'_{2\alpha}), \Sigma^{\SC} \setminus
    \{2\alpha\}\rangle \coloneqq \{0\}$. This replaces $\Qm^*$ by some
    $(\Qm^*)'$ with $\vartheta' \in (\Qm^*)'$. Moreover, we have
    $\langle \cfr(D'_{2\alpha}), \vartheta'\rangle = 0$, hence
    $\is{\Rs} \le \is{\Rs'}$.
       
   \emph{Case 2: other cases.} If there does not exist
   $\gamma \in \Sigma^{\SC}$ with
   $\langle \cfr(D_{2\alpha}), \gamma \rangle < 0$, write
   $\vartheta = c_0\cdot \gamma_0 + r(\vartheta)$ where $r(\vartheta)$ is a
   linear combination of $\Sigma^{\SC} \setminus \{\gamma_0\}$. It is
   not difficult to see that we have $r(\vartheta) \in \Theta$. Hence,
   if $\Rs'$ is obtained from $\Rs$ by adding a $G$-invariant divisor
   $D'_{2\alpha}$ with
   $\langle \cfr(D'_{2\alpha}), 2\alpha \rangle \coloneqq -1$ and
   $\langle \cfr(D'_{2\alpha}), \Sigma^{\SC} \setminus
   \{2\alpha\}\rangle \coloneqq \{0\}$, we have $\is{\Rs} \le \is{\Rs'}$.

   Otherwise, there is a double edge with short root $\alpha$ in the
   Dynkin diagram of $(R, S)$. In order to obtain $\Rs'$ from $\Rs$,
   replace this double edge by a single edge, the spherical root
   $2\alpha \in \Sigma^{\SC}$ by $\alpha$, and the color $D_{2\alpha}$
   by two colors $D_\alpha^+$ and $D_\alpha^-$ with
   $\langle \cfr(D_{\alpha}^+), \alpha \rangle \coloneqq 1$,
   $\langle \cfr(D_{\alpha}^+), \Sigma^{\SC} \setminus
   \{2\alpha\}\rangle \coloneqq \{0\}$,
   $ \cfr(D_\alpha^-) = \cfr(D_{2\alpha})$, and
   $\varsigma(D_\alpha^+) \coloneqq \varsigma(D_\alpha^-) \coloneqq
   \{\alpha\}$. It is not difficult to see that we obtain a
   well-defined spherical skeleton $\Rs'$ with
   $\is{\Rs} \le \is{\Rs'}$.

  In every case, replace $\Rs$ by $\Rs'$ and repeat until $\srad = \emptyset$.
\end{proof}

\begin{theorem}
  In Conjecture~\ref{conj:ssc} we may assume without loss of
  generality that the spherical skeleton $\Rs$ is factorial.
\end{theorem}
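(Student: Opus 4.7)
The plan is to reduce via Lemma~\ref{le:fact}. Assume Conjecture~\ref{conj:ssc} holds for every factorial complete spherical skeleton, and let $\Rs$ be an arbitrary complete spherical skeleton. Lemma~\ref{le:fact} produces a factorial complete spherical skeleton $\Rs'$ with $\is{\Rs}\le\is{\Rs'}$, obtained from $\Rs$ by adjoining either colors or $G$-invariant divisors. Since this construction leaves the root data $R$, $S$ and the set $\varsigma(\adiv)$ unchanged, and since the parabolic $P\subseteq G$ stabilizing the open $B$-orbit is determined by $\varsigma(\adiv)$, the quantity $\dim G/P$ is the same for $\Rs$ and $\Rs'$.

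For the inequality in Conjecture~\ref{conj:ssc}, I would apply the factorial hypothesis to $\Rs'$ to get $\is{\Rs'}\le\dim G/P$, and chain: $\is{\Rs}\le\is{\Rs'}\le\dim G/P$. For the equality characterization, one direction is immediate: a linear skeleton is already factorial (by the remark following the definition of linearity), so taking $\Rs'=\Rs$ the factorial case yields $\is{\Rs}=\dim G/P$ whenever $\Rs$ is linear.

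For the converse direction---if $\is{\Rs}=\dim G/P$ then $\Rs$ is linear---the chain of inequalities collapses to $\is{\Rs}=\is{\Rs'}=\dim G/P$, so $\Rs'$ is linear by the factorial hypothesis. The main obstacle is concluding that $\Rs$ itself is linear, since Lemma~\ref{le:fact} can genuinely enlarge $\Rs$ by adding divisors without altering $\is{\cdot}$. My approach would be to upgrade Lemma~\ref{le:fact} to a \emph{strict} inequality $\is{\Rs}<\is{\Rs'}$ whenever $\srad\ne\emptyset$, which would force $\srad=\emptyset$ (hence $\Rs=\Rs'$ and $\Rs$ linear) in the equality case. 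In case~(1) of the lemma the added contribution $m_{D_{2\alpha}}-1+\langle\cfr(D_{2\alpha}),\vartheta'\rangle$ should be made strictly positive by choosing $\vartheta'$ in the relative interior of an appropriate face of $\Qm^*\cap\Tm$ and invoking $m_{D_{2\alpha}}=1$ for colors of type $2a$. The real difficulty is case~(2), where the added $G$-invariant divisor is engineered so that the new contribution $\langle\alpha^*,\vartheta'\rangle=0$ vanishes identically and strictness is unavailable. For this case I would instead argue directly from the linearity of $\Rs'$: the multiplicity-free space $V'$ realizing $\Rs'$ should restrict, along the added $G$-invariant divisor $D'$, to a subvariety again of multiplicity-free type whose skeleton is $\Rs$, yielding the linearity of $\Rs$ without needing strict inequality.
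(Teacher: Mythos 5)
Your reduction via Lemma~\ref{le:fact}, the chain $\is{\Rs}\le\is{\Rs'}\le\dim G/P$, and the observation that a linear skeleton is factorial (so that the equality case in one direction is immediate) all coincide with the paper's argument. The genuine gap is in the converse direction, \ie showing that $\is{\Rs}=\dim G/P$ forces $\Rs\cong\Rs'$. Here the paper does \emph{not} argue abstractly: it uses that $\Rs'$ is linear to reduce, via the classification of multiplicity-free spaces, to the case where $\Rs'$ is a factor from the list in \cite[Section~2]{gag15}, and then checks by hand which entries can actually arise from a non-factorial $\Rs$ through the modifications of Lemma~\ref{le:fact}. Your proposal replaces this classification step by two ad hoc arguments, and both fail.

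In case~(1), your claimed strict inequality $\is{\Rs}<\is{\Rs'}$ whenever $\srad\ne\emptyset$ is not substantiated: the construction in Lemma~\ref{le:fact} only guarantees a maximizer $\vartheta'$ with $\langle\cfr(D_{2\alpha}),\vartheta'\rangle\ge 0$, and nothing prevents all maximizers of the enlarged sum from lying on the face where this pairing vanishes, in which case the suprema agree; ``choosing $\vartheta'$ in the relative interior of an appropriate face'' does not rule this out. The paper obtains strictness only for the three linear skeletons where case~(1) can occur, namely entries $(21)$, $(22)$, $(38)$ of the list, by explicit computation ($\is{\Rs_{21}}=0<1=\is{\Rs'_{21}}$, etc.). In case~(2) your argument is self-defeating: if restricting the multiplicity-free space $V'$ along the added $G$-invariant divisor really produced a variety with skeleton $\Rs$, then $\Rs$ would be linear, hence factorial --- but Lemma~\ref{le:fact} is only applied when $\srad\ne\emptyset$, \ie when $\Rs$ is \emph{not} factorial, a contradiction. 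The true resolution, which again needs the classification, is that case~(2) with $\Rs'$ linear occurs only when $\Rs'$ is entry $(5)$ of the list, and then $\Rs$ (obtained by deleting the $G$-invariant divisor) fails to be \emph{complete}, contradicting the hypothesis of Conjecture~\ref{conj:ssc} rather than yielding linearity. Without the case analysis over the list in \cite[Section~2]{gag15}, neither half of your converse direction closes.
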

\begin{proof}
  We assume that Conjecture~\ref{conj:ssc} holds for spherical
  skeletons which are factorial.

  Let $\Rs$ be an arbitrary complete spherical skeleton. Let $\Rs'$ be
  a complete factorial spherical skeleton as in Lemma~\ref{le:fact}.
  The corresponding change from $G/P$ to $G/P'$ satisfies
  $\dim G/P' \le \dim G/P$, with $\dim G/P' = \dim G/P$ if and only if
  possibility $(2)$ of Lemma~\ref{le:fact} is not used.
  
  We have $\is{\Rs} \le \is{\Rs'} \le \dim G/P' \le \dim G/P$. If
  $\Rs$ is linear, then $\Rs$ is factorial, hence we have
  $\is{\Rs} = \dim G/P$.

  Now assume $\is{\Rs} = \dim G/P$. We obtain
  $\is{\Rs} = \is{\Rs'} = \dim G/P' = \dim G/P$, so that $\Rs'$ is
  linear and possibility $(2)$ of Lemma~\ref{le:fact} is not used. We
  may assume without loss of generality that $\Rs'$ is a factor from
  the list in \cite[Section~2]{gag15}. The situation $\Rs \ncong \Rs'$
  according to Lemma~\ref{le:fact} can only occur if $\Rs'$ is $(5)$
  from the list in \cite[Section~2]{gag15} and $\Rs$ is obtained from
  $\Rs'$ by removing the $G$-invariant prime divisor, in which case
  $\Rs$ is not complete. Hence $\Rs \cong \Rs'$ is linear.
\end{proof}

\section*{Acknowledgements}
I would like to thank Johannes Hofscheier and Mikhail Borovoi for
several helpful comments and discussions. I am also grateful to
Johannes Hofscheier for sending me his preprint \cite{hofe}.

\bibliographystyle{amsalpha}
\bibliography{sph,cics,../misc}

\def\polhk#1{\setbox0=\hbox{#1}{\ooalign{\hidewidth
  \lower1.5ex\hbox{`}\hidewidth\crcr\unhbox0}}} \def\cprime{$'$}
\providecommand{\bysame}{\leavevmode\hbox to3em{\hrulefill}\thinspace}
\providecommand{\MR}{\relax\ifhmode\unskip\space\fi MR }
\providecommand{\MRhref}[2]{%
  \href{http://www.ams.org/mathscinet-getitem?mr=#1}{#2}
}
\providecommand{\href}[2]{#2}
\begin{thebibliography}{ADHL15}

\bibitem[AB04]{ab04}
Valery~A. Alexeev and Michel Brion, \emph{Boundedness of spherical {F}ano
  varieties}, The {F}ano {C}onference, Univ. Torino, Turin, 2004, pp.~69--80.

\bibitem[ADHL15]{adhl15}
Ivan Arzhantsev, Ulrich Derenthal, J{\"u}rgen Hausen, and Antonio Laface,
  \emph{Cox rings}, Cambridge Studies in Advanced Mathematics, vol. 144,
  Cambridge University Press, Cambridge, 2015.

\bibitem[BL11]{bl11}
P.~Bravi and D.~Luna, \emph{An introduction to wonderful varieties with many
  examples of type {$\rm F_4$}}, J. Algebra \textbf{329} (2011), 4--51.

\bibitem[Bri97]{bri97}
M.~Brion, \emph{Curves and divisors in spherical varieties}, Algebraic groups
  and {L}ie groups, Austral. Math. Soc. Lect. Ser., vol.~9, Cambridge Univ.
  Press, Cambridge, 1997, pp.~21--34.

\bibitem[Bri07]{bri07}
Michel Brion, \emph{The total coordinate ring of a wonderful variety}, J.
  Algebra \textbf{313} (2007), no.~1, 61--99.

\bibitem[Gag14]{gag14}
Giuliano Gagliardi, \emph{The {C}ox ring of a spherical embedding}, J. Algebra
  \textbf{397} (2014), 548--569.

\bibitem[Gag15]{gag15}
\bysame, \emph{A combinatorial smoothness criterion for spherical varieties},
  Manuscripta Math. \textbf{146} (2015), no.~3-4, 445--461.

\bibitem[GH15]{gh15b}
Giuliano Gagliardi and Johannes Hofscheier, \emph{Gorenstein spherical {F}ano
  varieties}, Geom. Dedicata \textbf{178} (2015), 111--133.

\bibitem[GH17]{gh17}
\bysame, \emph{The generalized {M}ukai conjecture for symmetric varieties},
  Trans. Amer. Math. Soc. \textbf{369} (2017), no.~4, 2615--2649.

\bibitem[Hof18a]{hofd}
Johannes Hofscheier, \emph{Containment relations among spherical subgroups},
  arXiv:1804.00378v1.

\bibitem[Hof18b]{hofe}
\bysame, \emph{Twisted equivariant morphisms between spherical varieties},
  preprint.

\bibitem[HW18]{hw18}
J\"urgen Hausen and Milena Wrobel, \emph{On iteration of {C}ox rings}, J. Pure
  Appl. Algebra \textbf{222} (2018), no.~9, 2737--2745.

\bibitem[Kno91]{kno91}
Friedrich Knop, \emph{The {L}una-{V}ust theory of spherical embeddings},
  Proceedings of the {H}yderabad {C}onference on {A}lgebraic {G}roups
  ({H}yderabad, 1989) (Madras), Manoj Prakashan, 1991, pp.~225--249.

\bibitem[Los09a]{los09b}
Ivan~V. Losev, \emph{Proof of the {K}nop conjecture}, Ann. Inst. Fourier
  (Grenoble) \textbf{59} (2009), no.~3, 1105--1134.

\bibitem[Los09b]{los09a}
\bysame, \emph{Uniqueness property for spherical homogeneous spaces}, Duke
  Math. J. \textbf{147} (2009), no.~2, 315--343.

\bibitem[Lun97]{lun97}
D.~Luna, \emph{Grosses cellules pour les vari\'et\'es sph\'eriques}, Algebraic
  groups and {L}ie groups, Austral. Math. Soc. Lect. Ser., vol.~9, Cambridge
  Univ. Press, Cambridge, 1997, pp.~267--280.

\bibitem[Lun01]{lun01}
\bysame, \emph{Vari\'et\'es sph\'eriques de type {$A$}}, Publ. Math. Inst.
  Hautes \'Etudes Sci. (2001), no.~94, 161--226.

\bibitem[Per14]{per14}
Nicolas Perrin, \emph{On the geometry of spherical varieties}, Transform.
  Groups \textbf{19} (2014), no.~1, 171--223.

\bibitem[Tim00]{tim00}
D.~A. Timashev, \emph{Cartier divisors and geometry of normal {$G$}-varieties},
  Transform. Groups \textbf{5} (2000), no.~2, 181--204.

\bibitem[Tim11]{tim11}
Dmitry~A. Timashev, \emph{Homogeneous spaces and equivariant embeddings},
  Encyclopaedia of Mathematical Sciences, vol. 138, Springer, Heidelberg, 2011,
  Invariant Theory and Algebraic Transformation Groups, 8.

\end{thebibliography}

\end{document}